\newtheorem{thm}{Theorem}
\newtheorem{de}{Definition}
\newtheorem{rem}{Remark}
\newtheorem{conj}{Conjecture}
\newtheorem{lem}{Lemma}
\newtheorem{prop}{Proposition}
\newtheorem{hyp}{Hypothesis}
\newtheorem{cor}{Corollary}
\newcommand{\ov}{\overline}
\title{Lehmer problem and Drinfeld modules}
\author{Luca Demangos\\Department of Mathematical Sciences (Mathematics Division),\\
University of Stellenbosch,
Private Bag X1,
Matieland 7602,
South Africa\\l.demangos@gmail.com}
\begin{document}
\maketitle
\begin{abstract}
We propose a lower bound estimate in Dobrowolski's form of the canonical height of a Drinfeld module having a positive density of supersingular primes. This estimate takes into account the inseparable case and it is given as a function of: the degree of the field of coefficients, the height of the module and its rank. We will show that the class of Drinfeld modules we consider includes %up to isogeny 
all CM Drinfeld modules with rank either 1 or a prime number different from the field characteristic.
\end{abstract}
\section*{Acknowledgement}
%\begin{abstract}
This work was made during author's permanence as a PhD student at the Laboratoire Paul Painlevé - Université des Sciences et Technologies de Lille, France. The author gratefully thanks Laurent DENIS (Laboratoire Paul Painlevé), Vincent BOSSER (LMNO, Caen), Sinnou DAVID (Institut de Mathématiques Université Pierre et Marie Curie) and Hugues BAUCHERE (LMNO, Caen) for the fundamental help and support. This research was funded by GTEM project (Marie Curie actions).
%\end{abstract}
\section{Introduction}
We study the natural analogue of Lehmer problem on Drinfeld modules. We consider in particular a special class of such modules, satisfying congruence properties that, for a suitable positive real number $r$, we call RV($r$) or RV($r$)$^{*}$ (see Definition 4 and Definition 5 below). We will call $A:=\mathbb{F}_{q}[T]$ the polynomial ring in one variable $T$ defined over the finite field of $q$ elements, where $q$ is a power of a chosen prime number $p$. We also call $k$ the fraction field of $A$, and $k_{\infty}=\mathbb{F}_{q}((1/T))$ the completion of $k$ with respect to the place at infinity. Let us call $\mathcal{C}:=(\ov{k_{\infty}})_{\infty}$ the completion of a chosen algebraic closure of $k_{\infty}$. This field is therefore algebraically closed and complete.\\\\
%We may see $A$ as the analogue in our situation of the ring $\mathbb{Z}$ so that $k$ corresponds to the field $\mathbb{Q}$. We thus define the field $k_{\infty}$ as the completion of $k$ by the absolute value $|\cdot|_{1/T}$ (corresponding to the place at the infinity over $k$), so that $k_{\infty}$ would be the analogue of $\mathbb{R}$ (which is defined as the archimedean completion of $\mathbb{Q}$). We then fix an algebraic closure $\ov{k_{\infty}}$ of $k_{\infty}$. As the algebraic closure of a complete field does not always remain complete (although a completion of an algebraically closed field is still algebraically closed), the analogue of $\mathbb{C}$ in our situation will not be $\ov{k_{\infty}}$ but its completion with respect to the same absolute value:\[\mathcal{C}:=(\ov{k_{\infty}})_{\infty}.\]We then define $\ov{k}$ as the algebraic closure of $k$ in $\mathcal{C}$.\\\\
The main result we propose in this work (Theorem 2) is the following. Given a Drinfeld module (see Definition 1) $\mathbb{D}=(\mathbb{G}_{a},\Phi)$ satisfying suitable congruence properties involving the density of supersingular primes (see Definition 4 and Definition 5), we provide a lower bound estimate of the canonical height of a non-torsion point $x\in \mathbb{D}(\ov{k})$ with respectively algebraic degree and purely inseparable degree $D$ and $D_{p.i.}$ over $k$ in the following form:\[C\frac{(\log\log{D})^{\mu}}{DD_{p.i.}^{\lambda}(\log{D})^{\kappa}},\]where the positive constants $C$, $\kappa$, $\mu$ and $\lambda$ are explicitly computed as functions of the three arithmetic parameters attached to $\mathbb{D}$: the degree of the field of coefficients $k(\Phi)$, the height $h(\Phi)$ of the Drinfeld module, and the rank $d$.\\\\
%In the first paragraph we introduce the algebraic objects that we will study, a brief summary of the problem we are interested in and we will state our main Theorem (see Theorem 2). In the second paragraph we shall provide all the preliminar results which are necessary to our proof. In the third paragraph we develop our proof strategy, providing a proof for Theorem 2. In the Appendix, the fourth paragraph, we will use the Chebotarev Density Theorem for function fields to explicitly provide a sufficiently ample class of Drinfeld modules respecting the hypotheses of Theorem 2. More in particular, we will see as the Chebotarev Theorem allows to estimate the number of supersingular primes for a given Drinfeld module and that the complex multiplication case satisfies, under certain restrictions, the required hypotheses.%\\\\
%We shall also call $\alpha(\Phi)=\alpha:=c(\Phi)h(\Phi)$.
Let:\[\tau:\mathcal{C}\to \mathcal{C}\]\[z\mapsto z^{q}\]be the Frobenius map and:\[\ov{k}\{\tau\}:=\{c_{0}+c_{1}\tau + ... + c_{n}\tau^{n},\texttt{   }c_{1}, ..., c_{n}\in \ov{k},\texttt{   }n\in \mathbb{N}\}\]be the \textsl{Ore algebra} of the $\mathbb{F}_{q}-$additive forms with coefficients in $\ov{k}$\footnote{We remark that such an algebra is not commutative as of course in general for $c\in \ov{k}$ one has $\tau c=c^{q}\tau\neq c\tau$.}.
\begin{de}
A \textbf{Drinfeld module} of \textbf{rank $d$} defined over $\ov{k}$ is a pair:\[\mathbb{D}=(\mathbb{G}_{a},\Phi),\]where $\mathbb{G}_{a}$ is the additive group of $\mathcal{C}$ and $\Phi$ is an injective $\mathbb{F}_{q}-$algebra homomorphism:\[\Phi:A\to \ov{k}\{\tau\},\]defined so that:\[\Phi(T)=\sum_{i=0}^{d}a_{i}\tau^{i}\]where $a_{0}, ..., a_{d}\in \ov{k}$ are such that:\[a_{0}(T)=T\texttt{ and }a_{d}(T)\neq 0.\]We call $k(\Phi):=k(a_{1}, ..., a_{d})$ the \textbf{field of coefficients} of $\mathbb{D}$ (alternatively, we say that $\mathbb{D}$ is defined over $k(\Phi)$).
\end{de}
We call \textbf{torsion point} of the Drinfeld module $\mathbb{D}=(\mathbb{G}_{a},\Phi)$ a point $x\in \ov{k}$ such that there exists $a\in A\setminus\{0\}$ for which we have:\[\Phi(a)(x)=0.\]In particular, we say that $x$ is a \textbf{$a-$torsion point} for this $a$. We also denote by $\Phi[a]$ the set (which is in particular a $\mathbb{F}_{q}-$vector space) of the $a-$torsion points of $\mathbb{D}$. We define:\[\mathbb{D}(\ov{k})_{NT}:=\ov{k}\setminus \bigcup_{a\in A\setminus\{0\}}\Phi[a]\]the set of non-torsion points of $\mathbb{D}$.\\\\
The \textbf{Carlitz module} $C=(\mathbb{G}_{a},\Phi)$ is defined so that:\[\Phi(T)=T+\tau\]and it is the simplest example of a Drinfeld module having rank $1$.\\\\
The \textbf{Lehmer conjecture} in its original form concerns the multiplicative group $\mathbb{G}_{m}(\ov{\mathbb{Q}})$ and predicts a bound taking this shape:\[h(x)>>\frac{1}{[\mathbb{Q}(x):\mathbb{Q}]}\]for all $x\in \mathbb{G}_{m}(\ov{\mathbb{Q}})$ which are not roots of unity.\\\\%(see the second paragraph). 
Different versions of this conjecture have been proposed, in particular stating lower bounds of the same shape for the Néron-Tate height of non-torsion points of an abelian variety. L. Denis conjectured in \cite{Denis} the following analogue for the canonical height (see the second paragraph for the definition) of the algebraic non-torsion points of a general Drinfeld module defined over $\ov{k}$:
\begin{conj}
There exists a constant $c>0$ only depending on the Drinfeld module $\mathbb{D}=(\mathbb{G}_{a},\Phi)$, such that each point $x\in\mathbb{D}(\ov{k})_{NT}$ of degree $D$ over $k$ satisfies the following inequality:\[\widehat{h}_{\mathbb{D}}(x)\geq\frac{c}{D}.\]
\end{conj}
For the specific case of the Carlitz module L. Denis also obtained in the same paper the following:%, up to a power of $\log{D}$, supposing that the concerned points are separable over $k$. This is actually a CM-type Drinfeld module (see the Appendix) and, more generally, a RV($1$)-type (see Definition 4 and the subsequent argument for more details):
\begin{thm}[Denis]
Let $\mathbb{D}$ be the Carlitz module. There exists $\eta>0$ depending on $q$ such that for each $x$ non-torsion algebraic and separable point with degree $\leq D$ over $k$:\[\widehat{h}_{\mathbb{D}}(x)\geq\frac{\eta}{D}(\frac{\log\log(qD)}{\log(qD)})^{3}.\]
\end{thm}
%Without any hypothesis over the Drinfeld module or the degree of separability of the point $x$, but just assuming a local condition on $x$, %related to the behaviour of the value that the valuations in a place over the infinity takes on it, au dessus de l'infini 
D. Ghioca (see \cite{Gh}, Remark 5) showed moreover, with no conditions on the Drinfeld module but on a strong local condition on $x$, that there exists a number $k\geq 1$, depending only on the chosen Drinfeld module, such that:\[\widehat{h}_{\mathbb{D}}(x)>>\frac{1}{D^{k}}.\]\\\\
%An extra condition on the nature of such an extension of places provides also a bound such that $k\leq d$, where $d$ is the rank of the Drinfeld module.\\\\
Another result has also been found recently by S. David and A. Pacheco (see \cite{Dav-Pach}) who showed the following lower bound estimate:\[\widehat{h}_{\mathbb{D}}(x)\geq c(\mathbb{D},K)\]for a Drinfeld module $\mathbb{D}$ defined over the field $K\subset \ov{k}$, where $c(\mathbb{D},K)>0$ is a positive constant only depending on $\mathbb{D}$ and $K$, and $x\in K^{ab.}$, where $x$ is non-torsion and $K^{ab.}$ is the abelian closure of $K$ in $\ov{k}$. Such a result is in analogy with the work of F. Amoroso and R. Dvornicich (see \cite{Am-Dv}) which provides an estimate of this form for the height of an element $x\in \mathbb{G}_{m}(\mathbb{Q}^{ab.})\setminus \mathbb{G}_{m}(\mathbb{Q}^{ab.})_{tors.}$.\\\\
%We will see later than if we focus just on the points $x\in \mathbb{D}(\ov{k})_{NT}$ having a degree $D$
%which is less than a previously chosen constant $M>0$, such a statement is, in our specific case, a direct consequence of Northcott's Theorem, which will be stated after in this text, and allows us to obtain a constant $c$ even better than the one provided by our final result. It would be thus sufficient to establish a lower bound estimate of the canonical height for $D\geq M$ to obtain the general minoration proposed in this text (see Theorem 1 and 2) up to multiplication by a power of $\log_{q}{D}$ in the separable case and by a power of $D$ in general, for a class of modules which we'll define after.\\\\
We give now the fundamental notations about the logarithmic functions we will use:\[\log(.):=\log_{q}(.).\]Each logarithm will have always basis $q$ unless we specify differently.\[\log_{+}(.):=\max\{\log(.),1\}\]\[\log\log_{+}(.):=\max\{\log\log(.),1\}.\]We will indicate from now on the degree in $T$ of each polynomial $a\in A=\mathbb{F}_{q}[T]$ by $\deg_{T}(a)$.\\\\
We define:\[S(A):=\{l\in A, \texttt{ monic and irreducible}\}.\]We also define, given some $N\in \mathbb{N}\setminus\{0\}$%:\[\mathcal{P}_{N}(k):=\{p\in S(A), \deg_{T}(p)\leq N\}\]et
:\[P_{N}(A):=\{l\in S(A), \deg_{T}(l) =N\}.\]We will also say that $l\in$ $S(A)$ \textbf{satisfies the RV property}\footnote{The acronym "RV" has been suggested to the author by the french word \textsl{relèvement} he was using to describe the property 1 of the primes involved, which means that there is a "lifting" of the $d-$th power of the Frobenius automorphism by an endomorphism of $\Phi$.} with respect to $\Phi$ if:\begin{enumerate}
	%\item les coefficients $a_{i}\in\ov{k}$, $i=0, ..., d$ du module de Drinfeld $\Phi$ ont valuation $l-$adique $\geq 0$;
	\item For each place $v$ dividing $v_{l}$ (the place associated to $l$ over $k$) in the extension $k(\Phi)/k$, the coefficients $a_{i}$ of $\Phi$ are such that $v(a_{i})\geq 0$ and:\[\Phi(l)(X)\equiv X^{q^{d\deg_{T}(l)}} \texttt{ mod }(v)\]where:\[\Phi(l)(X)\in \mathcal{O}_{v}[X],\]%\footnote{On verra dans l'Appendice que cette condition particulière de $l$ est également appelée \textbf{avoir réduction supersingulière} de $\Phi$.} 
the ring $\mathcal{O}_{v}$ being the ring of $v-$integers in $k(\Phi)$; 
	\item All places extending $v_{l}$ in $k(\Phi)$ have inertial degree $1$. %(we say in this case that $l$ is \textbf{not inert} in such an extension).
\end{enumerate}
\begin{de}
Let $r\in ]0,1]$ be a real number and $c_{1}$ a fixed positive constant. A Drinfeld module $\mathbb{D}=(\mathbb{G}_{a},\Phi)$ is called \textbf{RV($r,c_{1}$)}, %or, à \textbf{bon $r$-relèvement} de son anneau des endomorphismes 
if for each natural number $N>0$:\[|\{l\in P_{N}(A), l\texttt{ is }RV\}|\geq c_{1}\frac{q^{rN}}{N}.\]
\end{de}
\begin{de}
A Drinfeld module $\mathbb{D}=(\mathbb{G}_{a},\Phi)$ is \textbf{RV($r,c_{1}$)$^{*}$}, with $r\in ]0,1]$ and $c_{1}>0$ a fixed constant, if there exists $N(\Phi)\in \mathbb{N}\setminus\{0\}$ such that, for each $N\geq N(\Phi)$:\[|\{l\in P_{N}(A), l\texttt{ is }RV\}|\geq c_{1}\frac{q^{rN}}{N}.\]
\end{de}
We fix $c_{1}=1/2r$ to ease notations reducing the number of parameters. We leave to the reader a generalization (not quite relevant) of the proposed estimates to a more general $c_{1}$. This directly follows by a mechanical repetition of the same steps of our argument. The choice of $c_{1}=1/2r$ has been suggested by the fact that, as we will see soon (Proposition 3) the value $c_{1}=1/2$ is the maximal that one can choose if $r=1$.
\begin{de}
Let $r\in ]0,1]$ be a real number. A Drinfeld module $\mathbb{D}=(\mathbb{G}_{a},\Phi)$ is \textbf{RV($r$)} if it is RV($r,1/2r$).
\end{de}
\begin{de}
Let $r\in]0,1]$ be a real number. A Drinfeld module $\mathbb{D}=(\mathbb{G}_{a},\Phi)$ is \textbf{RV($r$)$^{*}$} if it is RV($r,1/2r$)$^{*}$. 
\end{de}
It is clear that the condition RV($r$)$^{*}$ is implied by the RV($r$) one for every $r\in ]0,1]$. We also remark that the Carlitz module is RV($1$). Indeed, one can prove (see \cite{Hayes}, Proposition 2.4) that each $l\in S(A)$ has supersingular reduction with respect to the Carlitz module. In particular, the Carlitz module will satisfy our Theorems.\\\\
A result extending the study to rank 2 Drinfeld modules was showed by C. David in \cite{C. David}: in average, a rank 2 Drinfeld module with coefficients in $k$ (note that under this condition the properties for a prime to be supersingular or RV are equivalent) satisfies the analogue of the Lang-Trotter conjecture (in simple terms, the growth of the number of supersingular reduction primes takes the shape:\[|\{l\in P_{N}(A),\texttt{ }l\textsl{ is supersingular}\}|\sim_{N\to +\infty}\frac{q^{N/2}}{N}).\]%\footnote{Essentiellement, tout module de Drinfeld $\Phi$ de rang $d$ est sensé \^{e}tre de type RV($1/2,c_{1}$)$^{*}$ pour une constante $c_{1}>0$ opportune, ne dépendant que de $\Phi$.}
This provides a considerable number of examples, in rank $2$, satisfying the RV($r,c_{q}$)$^{*}$ condition, with $r=1/d=1/2$ for some constant $c_{q}>0$ only depending on $q$. We point out anyway that this conjecture for Drinfeld modules is \textbf{false} (yet remaining open in the "classic" case of the elliptic curves), for each possible value of the rank, as a consequence of the remarkable work of B. Poonen, \cite{B. Poonen}.\\\\
The methods that we will present in the Appendix will also show that the class of Drinfeld modules with complex multiplication having either rank $1$ or a prime number different from the characteristic of $k$ is contained in RV($1,1/2d$)$^{*}$.\\\\
%What we propose in the present work is that, given a Drinfeld module $\mathbb{D}=(\mathbb{G}_{a},\Phi)$ which is RV($r$) and, more generally, RV($r$)$^{*}$, is a lower bound estimate of the canonical height of each point $x\in \mathbb{D}(\ov{k})_{NT}$ whose the algebraic degree over $k$ is $D$ and the purely inseparable degree is $D_{p.i.}$, having the following form:\[C\frac{(\log\log{D})^{\mu}}{DD_{p.i.}^{\lambda}(\log{D})^{\kappa}}\]where the positive constants $C$, $\kappa$, $\mu$ and $\lambda$ explicitly computed in function of the three arithmetic parameters which characterize the Drinfeld module $\mathbb{D}=(\mathbb{G}_{a},\Phi)$: the dimension $c(\Phi)$ of the field $k(\Phi)$, the height $h(\Phi)$ of the Drinfeld module $(\mathbb{G}_{a},\Phi)$ (it is just the naif height of the polynomial $\Phi(T)$ that we recall in our Definition 7), and the rank $d$. We shall also call $\alpha(\Phi)=\alpha:=c(\Phi)h(\Phi)$. This is resumed by the following general theorems. 
We will use the following notation for the degree of the extension fields which will be involved:\[D=[k(x):k],\texttt{   }c(\Phi):=[k(\Phi):k],\texttt{   }D':=[k(\Phi)(x):k(\Phi)].\]We also call:\[D_{p.i.}:=[k(x):k]_{p.i.}\]the inseparable degree of $x$ over $k$,\[D'_{p.i.}:=[k(\Phi)(x):k(\Phi)]_{p.i.}\]the inseparable degree of $x$ over $k(\Phi)$ and:\[D'_{sep.}:=[k(\Phi)(x):k(\Phi)]_{sep.}\]the separable degree of $x$ over $k(\Phi)$. We have that:\[D'=D'_{sep.}D'_{p.i.}.\]We also call $h(\Phi)$ the height of our Drinfeld module (see Section 2 for the definition). %It also exist $e',e\in \mathbb{N}\setminus\{0\}$ such that $e'\leq e$ and:\[D_{p.i.}=p^{e}\]and:\[D'_{p.i.}=p^{e'}.\]
We now state our main result in this work.
%\textbf{Separable case:} 
%\begin{thm}
%Let $\mathbb{D}=(\mathbb{G}_{a},\Phi)$ be a Drinfeld module having rank $d$ and height $h(\Phi)$. We pose:\[c_{0}:=6500dh(\Phi)^{3}c(\Phi)^{3}q^{d+rh(\Phi)c(\Phi)}.\]Let:\[C_{0}:=\min\{q^{-5d(2(d+1)h(\Phi)+1)((q^{q+d+1}-1)c(\Phi))^{2}},\frac{h(\Phi)c(\Phi)}{768rq^{d}c_{0}^{1+\frac{4d}{r}h(\Phi)c(\Phi)}}\}.\]For each $x\in\mathbb{D}(\ov{k})_{NT}$ separable with degree $D$ over $k$ and $D'$ over $k(\Phi)$:\[\Phi\texttt{ is }RV(r)\Longrightarrow \widehat{h}_{\mathbb{D}}(x)\geq C_{0}\frac{(\log\log_{+}{D'})^{2+\frac{d}{r}h(\Phi)c(\Phi)}}{D'(\log_{+}{D'})^{1+\frac{2d}{r}h(\Phi)c(\Phi)}}.\]\[\Phi\texttt{ is }RV(r)^{*}\Longrightarrow
%\texttt{   }   \exists 0<C\leq C_{0},\texttt{   }\widehat{h}_{\mathbb{D}}(x)\geq C\frac{(\log\log_{+}{D'})^{2+\frac{d}{r}h(\Phi)c(\Phi)}}{D'(\log_{+}{D'})^{1+\frac{2d}{r}h(\Phi)c(\Phi)}}.\]
%\end{thm}
%\textbf{General case:}
\begin{thm}
Let $\mathbb{D}=(\mathbb{G}_{a},\Phi)$ be a Drinfeld module defined over $\ov{k}$ satisfying the hypothesis $RV(r)$ or $RV(r)^{*}$. Let:\[c_{0}:=35000dh(\Phi)^{3}c(\Phi)^{3}q^{d+rh(\Phi)c(\Phi)}\]and:\[C_{0}:=\min\{q^{-5d(2(d+1)h(\Phi)+1)((q^{q+d+1}-1)c(\Phi))^{2}},\frac{h(\Phi)}{384rq^{d}c_{0}^{\frac{4h(\Phi)c(\Phi) d}{r}+1}}\}.\]Then, there exists $C>0$ such that for all $x\in\mathbb{D}(\ov{k})_{NT}$ one has:\[\widehat{h}_{\mathbb{D}}(x)\geq C\frac{(\log\log_{+}{D})^{\mu}}{D{D_{p.i.}}^{\lambda}(\log_{+}{D})^{\kappa}}\]where:\begin{equation}\mu:=2+\frac{d}{r}h(\Phi)c(\Phi);\label{eq:5}\end{equation} \begin{equation}\kappa:=1+\frac{3d}{r}h(\Phi)c(\Phi);\label{eq:6}\end{equation}\begin{equation}\lambda:=1+\frac{2d}{r}h(\Phi)c(\Phi);\label{eq:7}\end{equation}and:\[C=C_{0}\texttt{   under the hypothesis   }RV(r)\]while\[0<C\leq C_{0}\texttt{   under the hypothesis   }RV(r)^{*}.\]
\end{thm}
%As $D'\leq D$, $D'_{sep.}\leq D_{sep.}$ and $D'_{p.i.}\leq D_{p.i.}$, one can easily obtain the same (up to non significant changes of the constants depending on $c(\Phi)$) estimates replacing with $D, D_{p.i.}$ the values $D', D'_{p.i.}$ respectively. 
As $D_{p.i.}\leq D$ we conclude that:
\begin{cor}
Under the same hypotheses of Theorem 2 we have:\[\widehat{h}(x)\geq C\frac{(\log\log_{+}{D})^{\mu}}{D^{1+\lambda}(\log_{+}{D})^{\kappa}}.\]
\end{cor}
%We remark that such minorations hold because of the notation $\log_{+}(.)$ and $\log\log_{+}(.)$, for each $D\in \mathbb{N}\setminus\{0\}$.
%\begin{rem}
%Let $\mathbb{D}_{1}=(\mathbb{G}_{a},\Phi_{1})$ and $\mathbb{D}_{2}=(\mathbb{G}_{a},\Phi_{2})$ two Drinfeld modules respectively defined over the coefficient fields $k(\Phi_{1})$ and $k(\Phi_{2})$, having respectively canonical heights $\widehat{h}_{\mathbb{D}_{1}}$ and $\widehat{h}_{\mathbb{D}_{2}}$. Let $\mathcal{F}:=k(\Phi_{1})k(\Phi_{2})$. If there exists an \textbf{isogeny}\footnote{See \cite{Goss}, Section 4.7} $P(\tau)\in \mathcal{F}\{\tau\}$ between $\mathbb{D}_{1}$ and $\mathbb{D}_{2}$ we have that:\[\widehat{h}_{\mathbb{D}_{2}}(P(x))=q^{\deg_{\tau}(P)}\widehat{h}_{\mathbb{D}_{1}}(x)\]for all $x\in \mathbb{D}_{1}(\ov{k})_{NT}$.
%\end{rem}
%\begin{proof}
We note (see for example \cite{P1}, Proposition 2) that any lower bound in Dobrowolski's form of the canonical height associated to a Drinfeld module (and in particular our bound as well) extends essentially to the whole isogeny class of such a module, up to slight modifications of the multiplicative constant, depending on the degree of the isogeny. In particular, there are no changes at all to the multiplicative constant between isomorphic Drinfeld modules (case in which the isogeny degree is $0$, see \cite{Goss}, Chapter 4). 
%\end{proof}
%Note that if the two Drinfeld modules are \textbf{isomorphic}, this means that $\deg_{\tau}(P)=0$. In this case $P(x)=ux$ for some $u\in \mathcal{C}^{*}$ %An isogeny $P$ clearly induces a bijection between $\mathbb{D}_{1}(\ov{k})_{NT}$ and $\mathbb{D}_{2}(\ov{k})_{NT}$ 
%and it is easy to see by Remark 1 that if the estimates given in Theorem 2 apply to a Drinfeld module, they still hold for its entire isomorphism class, up to slight modifications of the degrees. %If there exists an element $u\in \mathcal{C}$ inducing an isomorphism between $\mathbb{D}_{1}$ and $\mathbb{D}_{2}$ (see \cite{Goss}, Proposition 4.7.1, page 79) 

\section{Preliminary results}
Let $\mathbb{P}^{1}(\ov{k})$ be the projective line defined over $\ov{k}$. If we take a \textbf{place} $v$ over $k$, it is well known that it is associated to an irreducible element $l\in A\setminus \{0\}$ or to the point $\infty\in \mathbb{P}^{1}(k)$ so that in the first case we have:\[v(x):=\deg_{T}(l)v_{l}(x)\texttt{     }\forall x\in k,\]where $v_{l}(x)$ is the $l-$divisibility index of $x$; while in the other one:\[v(x):=v_{\infty}(x):=-\deg_{T}(x)\texttt{     }\forall x\in k.\]Each one of such places has finitely many extensions to a finite field extension $L$ of $k$. %We introduce the notation "$w$ over $L$" to indicate all these extensions for all place $v$ over $k$. The \textit{logarithmic height} or \textit{Weil height} of a point $P=[P_{0}:...:P_{n}]\in \mathbb{P}^{n}(\ov{k})$ is the function $h$ defined as follows, for $k(P):=k(P_{0}, ..., P_{n})\subset \ov{k}$:\[h(P):=\frac{1}{[k(P):k]}\sum_{w\texttt{ over }k(P)}n_{w}\max_{i=0,...,n}\{-w(P_{i})\}.\]
Now, for each $x\in \ov{k}$ and each place $w$ over $k(x)/k$ restricting to $v$ in $k$, one defines:\[n_{w}:=[k(x)_{w}:k_{v}],\]where $k_{v}$ and $k(x)_{w}$ are respectively the completion of $k$ with respect to $v$ and the completion of $k(x)$ with respect to $w$. We recall the well-known facts that:\[[k(x):k]=\sum_{w|v}n_{w}\texttt{ and }n_{w}=e_{w}f_{w},\]where $e_{w}$ and $f_{w}$ are respectively the ramification index and the inertial degree of $w|v$. We note that:\[v(k^{*})\subseteq \mathbb{Z}\texttt{ and }w(k(x)^{*})\subseteq \frac{1}{e_{w}}\mathbb{Z}.\]Moreover, for every $\alpha\in k$ and every $w|v$, we have that $v(\alpha)=w(\alpha)$. 
%We refer to \cite{Denis} for the detailed basic definitions. Given $x\in \ov{k}$, we indicate by $w$ any place over $k(x)$ and by $n_{w}$ its \textsl{weight} (the product of its ramification index $e_{w}$ with its inertial degree $f_{w}$ over $k$).  %(the product of its ramification index and inertial degree over $k$). 
The \textbf{height} of $x$ is defined as follows:\[h(x)=\frac{1}{D}\sum_{w\texttt{ over }k(x)/k}n_{w}\max\{0,-w(x)\},\]where $D=[k(x):k]$. By writing "$w$ over $k(x)/k$" we mean that the sum is on all the places extending in $k(x)$ every place over $k$ as described above. The more general definition of height of $\ov{x}=(x_{1}, ..., x_{n})\in \ov{k}^{n}$ (for some $n>1$) is the following:\[h(\ov{x}):=\frac{1}{D}\sum_{w\texttt{ over } k(\ov{x})/k}n_{w}\max_{i=1, ..., n}\{0,-w(x_{i})\},\]where $D=[k(\ov{x}):k]$ and $k(\ov{x})=k(x_{1}, ..., x_{n})$. 
We list the main properties of the logarithmic height over $\ov{k}^{n}$ which will be needed in our proof. We start by introducing the following notation we will use along the entire text. Let $\ov{a}=(a_{1}, ..., a_{n})$ and $\ov{b}=(b_{1}, ..., b_{n})$ be two vectors of $n$ components, for any fixed positive integer $n$. We introduce the following notation:\[\ov{a}*\ov{b}:=(a_{1}b_{1}, ..., a_{n}b_{n}).\]
\begin{prop}
\begin{enumerate}
  \item Let $\ov{\alpha}, \ov{\beta}\in \ov{k}^{n}$. We have that:\[h(\ov{\alpha}+\ov{\beta})\leq h(\ov{\alpha})+h(\ov{\beta}).\]
	\item Let $\ov{\alpha},\ov{\beta}\in \ov{k}^{n}$. Then we have:\begin{equation} h(\ov{\alpha}*\ov{\beta})\leq h(\ov{\alpha})+h(\ov{\beta}). \label{eq:2} \end{equation} 
	\item  Let $\ov{\alpha},\ov{\beta}\in \ov{k}^{n}$. Let $(\ov{\alpha},\ov{\beta})\in \ov{k}^{2n}$ be the vector of $2n$ entries obtained by "glueing" $\ov{\alpha}$ with $\ov{\beta}$. Then: \begin{equation} h(\ov{\alpha}+\ov{\beta})\leq h(\ov{\alpha},\ov{\beta}). \label{eq:3} \end{equation} 
\end{enumerate}
\end{prop}
These properties are easily implied by the previous definitions.\\\\
%We define now the notion of height for a polynomial, as follows. %En parallèle à la notion de hauteur d'un point algébrique dans un espace projectif, nous définissons ci-dessous la hauteur d'un polyn\^{o}me, en voyant la bijection naturelle existant entre l'espace de polyn\^{o}mes à une variable de degré au plus $D$ à coefficients dans l'extension finie $K$ de $k$ et les points de $K^{D+1}$.
%\begin{de}
%Let $P(X)=b_{0}+b_{1}X+...+b_{D}X^{D}$ be a polynomial of degree $D$ having coefficients in $\ov{k}$. The \textbf{height} of $P(X)$ is:\[h(P(X)):=h([1:b_{0}:...:b_{D}]).\]
%\end{de}
%Pour garder le meme parallelisme qui nous permette de dire que:\[h(x)=h(X-x),\]on adopte la convention projective meme dans le cas apparemment inutile des $T-$modules. Mieux encore, la nature non-archimedienne de la métrique $1/T-$adique dans le cas des corps des fonctions étabilit une très forte liaison entre les deux hauteurs (sur un point ou sur un polyn\^{o}me) dans le cas où la dimension des varietés est $1$.\\\\
The \textbf{height $h(\Phi)$ of a Drinfeld module} $\mathbb{D}=(\mathbb{G}_{a},\Phi)$, where $\Phi(T)$ has coefficients $T, a_{1}, ..., a_{d}\in k(\Phi)$, is:\[h(\Phi)=h(T, a_{1},...,a_{d}).\]One can easily see that $h(\Phi)\geq 1$. %it follows:\[h(\Phi):=\frac{1}{c(\Phi)}\sum_{w\texttt{ over }k(\Phi)/k}n_{w}\max_{i=0, ..., d}\{0,-w(b_{i})\}.\]We remark that:\[h(\Phi)\geq 1.\]In fact:\[h(\Phi)=\frac{1}{c(\Phi)}\sum_{w\texttt{ over }k(\Phi)/k}n_{w}\max_{i=0, ..., d}\{0,-w(b_{i})\}\geq \frac{1}{c(\Phi)}\sum_{w\texttt{ over }k(\Phi)/k}n_{w}\max\{0,-w(T)\}=\]\[=\frac{1}{c(\Phi)}\sum_{v\texttt{ over }k}\sum_{w|v}n_{w}\max\{0,-v(T)\}=\sum_{v\texttt{ over }k}\max\{0,-v(T)\}=h([1:T])=1.\]
The \textbf{Néron-Tate height}, or \textbf{canonical height} of a Drinfeld module $\mathbb{D}=(\mathbb{G}_{a},\Phi)$ with rank $d$ has been introduced by %the polynomial form of degree $d$ in the Néron-Tate decomposition of the logarithmic height. Such a decomposition exists also in the Drinfeld modules case. It consists of a polynomial part of degree $d$ and of a bounded term. L'expression explicite de cette hauteur canonique, définie dans le travail de 
L. Denis \cite{Denis} as follows:\[\widehat{h}_{\mathbb{D}}(x)=\lim_{n\to\infty}\frac{h(\Phi(T^{n})(x))}{q^{dn}}.\]We replace from now on the notation "$\widehat{h}_{\mathbb{D}}$" by simply "$\widehat{h}$" as %the dependance on the chosen $\mathbb{D}$ has been established and 
in the entire text there will be no reference to other possible Drinfeld modules. 
\begin{prop}
Let $\mathbb{D}=(\mathbb{G}_{a},\Phi)$ be a Drinfeld module of rank $d$, such that:\[\Phi(T)(\tau)=T+a_{1}(T)\tau+...+a_{d}(T)\tau^{d}.\]We set:\[\gamma(\Phi):=\sup_{x\in \ov{k}}|h(x)-\widehat{h}(x)|.\]Then:\[\gamma(\Phi)<2(d+1)h(\Phi).\]%\frac{q^{d}}{(q-1)(q^{d}-1)}h(a_{1}, ..., a_{d},1/a_{1}, ..., 1/a_{d})\]where we do not take into account, in the expression of $h(a_{1}, ..., a_{d},1/a_{1}, ..., 1/a_{d})$, the elements $1/a_{i}$ when $a_{i}=0$.
\end{prop}
\begin{proof}
See \cite{D}, Théorème 1.2.7. 
\end{proof}
We give now a first rough lower bound estimate of the canonical height of a Drinfeld module. %Plusieurs travaux ont étés produits dans le but de trouver une borne supérieure effective la plus précise possible en fonction de ceux du degré algébrique et de la hauteur. Dans le cas d'un module de Drinfeld une estimation suffisante à nos objectifs est la suivante: 
\begin{lem}
For each $\chi \geq 1$, $D\geq 1$, we have:\[|\{x\in\ov{k}, [k(x):k]\leq D,h(x)\leq\chi\}|\leq q^{5D^{2}\chi}.\]%En particulier:\[|\{x\in\ov{k},[k(x):k]\leq D,h(x)\leq 1\}|\leq q^{5D^{2}}.\]
\end{lem}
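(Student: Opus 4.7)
La strat\'egie serait de param\'etrer les points \`a compter par leurs polyn\^omes minimaux sur $k$ et d'exploiter la Proposition 2. Pour chaque $x \in \ov{k}$ v\'erifiant $D' := [k(x):k] \leq D$ et $h(x) \leq \chi$, on consid\'ererait son polyn\^ome minimal unitaire $P(X) \in k[X]$ de degr\'e $D'$, et on l'\'ecrirait, comme dans la preuve de la Proposition 2, sous la forme $P(X) = \frac{1}{b_{D'}} \sum_{i=0}^{D'} b_i X^i$ avec $b_i \in A$ premiers entre eux et $b_{D'} \neq 0$; ce tuple $(b_0, \ldots, b_{D'}) \in A^{D'+1}$ serait d\'etermin\'e par $x$ \`a un scalaire de $\mathbb{F}_{q}^{*}$ pr\`es. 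La Proposition 2 fournirait alors imm\'ediatement $h(P(X)) = D' h(x) \leq D\chi$.

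Le point central serait l'\'evaluation de $h(P(X))$ via ce repr\'esentant \`a coefficients dans $A$. Comme la hauteur projective est invariante par multiplication scalaire, on aurait $h(P(X)) = h([b_{D'}:b_0:\ldots:b_{D'-1}:b_{D'}])$. Pour toute place finie $v_{l}$ ($l \in S(A)$), la coprimalit\'e des $b_i$ imposerait $\min_i v_{l}(b_i) = 0$, si bien que $\max_i\{-v_{l}(b_i)\} = 0$ et que les places finies ne contribueraient pas. Il ne resterait donc que la place $\infty$, fournissant $h(P(X)) = \max_{0 \leq i \leq D'} \deg_{T}(b_i)$. Combin\'e avec $h(P(X)) \leq D\chi$, on en d\'eduirait $\deg_{T}(b_i) \leq D\chi$ pour tout $i$, d'o\`u au plus $q^{\lfloor D\chi\rfloor + 1} \leq q^{D\chi + 1}$ choix possibles pour chaque $b_i$ dans $A$.

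La conclusion viendrait par un d\'ecompte direct. Le nombre de tuples admissibles de longueur $D'+1$ serait major\'e par $q^{(D\chi+1)(D'+1)}$, chaque polyn\^ome correspondant fournissant au plus $D'$ racines dans $\ov{k}$. En sommant sur $D' = 1, \ldots, D$ (via une estim\'ee g\'eom\'etrique \'el\'ementaire), le total serait au plus $2D \cdot q^{(D\chi+1)(D+1)}$. L'in\'egalit\'e $2D \leq q^D$ (valable pour $q \geq 2$, $D \geq 1$), combin\'ee aux majorations \'el\'ementaires $D\chi, 2D, 1 \leq D^2\chi$ pour $\chi \geq 1$ et $D \geq 1$, donnerait
\[2D \cdot q^{(D\chi+1)(D+1)} \leq q^{D^2\chi + D\chi + 2D + 1} \leq q^{5D^2\chi},\]
comme annonc\'e.

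Le seul obstacle qualitatif se situerait dans l'\'etape du deuxi\`eme paragraphe: elle exploite de mani\`ere essentielle l'ultram\'etricit\'e de la place \`a l'infini de $k$, qui n'a pas d'analogue dans le cas des corps de nombres (point d\'ej\`a soulign\'e dans la remarque suivant la preuve de la Proposition 2). Hormis ce point qualitatif, le reste n'est qu'un suivi routinier des exposants, la constante $5$ \'etant essentiellement la plus petite valeur enti\`ere qui accommode simultan\'ement tous les termes d'ordre inf\'erieur apparaissant dans la majoration finale.
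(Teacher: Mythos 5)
Your proposal is correct and follows essentially the same route as the paper's proof: parametrize by the (monic) minimal polynomial, invoke Proposition 2 to get $h(P(X)) = D'h(x)$, compute the height of the coprime integral coefficient vector as $\max_i \deg_T(b_i)$ via the ultrametric place at infinity, and count coefficient tuples bounded in $\deg_T$, then multiply by the number of roots. The only cosmetic difference is that you sum over exact degrees $D' = 1,\ldots,D$ and estimate the geometric series (paying a factor $2D \leq q^D$), while the paper counts all polynomials of degree $\leq D$ in a single $(D+1)$-tuple and pays a factor $D$; both comfortably absorb into the exponent $5D^2\chi$.
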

\begin{proof}
See \cite{D}, Lemme 1.2.9.
\end{proof}

\begin{lem}
Let $\mathbb{D}=(\mathbb{G}_{a},\Phi)$ be a Drinfeld module with rank $d$. By taking $c_{2}=q^{5d(2(d+1)h(\Phi)+1)c(\Phi)^{2}}$ we have, for all $x\in \mathbb{D}(\ov{k})_{NT}$ of degree $D$ over $k$:\[\widehat{h}(x)\geq\frac{1}{c_{2}^{D^{2}}}.\]
\end{lem}
\begin{proof}
%We choose $\chi=1+\gamma$ in Lemma 1, so that $\widehat{h}(x)\leq 1$ and $h(x)\leq 1+\gamma\leq 1+2(d+1)h(\Phi)$ by Proposition 2. An upper bound for the number of elements of the set described in Lemma 1 is therefore $q^{5D^{2}(1+\gamma)}$. Let us call:\[c_{3}:=5d(1+2(d+1)h(\Phi))\]so that:\[c_{2}=q^{c_{3}c(\Phi)^{2}}.\]
We proceed by contradiction. Assume that $\widehat{h}(x)<\frac{1}{c_{2}^{D^{2}}}=\frac{1}{q^{c_{3}D^{2}c(\Phi)^{2}}}$. Therefore for any $a\in A$ we have that:\[\widehat{h}(\Phi(a)(x))=q^{d\deg_{T}(a)}\widehat{h}(x)<\frac{q^{d\deg_{T}(a)}}{q^{c_{3}D^{2}c(\Phi)^{2}}}.\]Let us choose:\[\deg_{T}(a)\leq \frac{c_{3}D^{2}c(\Phi)^{2}}{d}.\]Therefore, $\widehat{h}(\Phi(a)(x))<1$ and thus, by Proposition 2:\[h(\Phi(a)(x))\leq 1+\gamma(\Phi)\leq 1+2(d+1)h(\Phi).\]Lemma 1 allows us to say that the number of elements $y$ algebraic with degree $\leq Dc(\Phi)$ over $k$ such that $h(y)\leq 1+2(d+1)h(\Phi)$, is at most $q^{5(1+2(d+1)h(\Phi))D^{2}c(\Phi)^{2}}$. Now all elements of the form $y=\Phi(a)(x)$ are algebraic of degree $[k(\Phi)(x):k]\leq Dc(\Phi)$. Moreover, since $x$ is non-torsion (so that if $a\neq b$ then $\Phi(a)(x)\neq\Phi(b)(x)$), we have for all positive integers $M$:\[\left|\bigcup_{a\in A,\deg_{T}(a)\leq M}\{\Phi(a)(x)\}\right|=q^{M+1}.\]Choosing $M=[\frac{1}{d}(c_{3}D^{2}c(\Phi)^{2})]$ we obtain $q^{M+1}$ distinct elements with degree over $k$ at most $Dc(\Phi)$ and height at most $1+2(d+1)h(\Phi)$. We also know that such a set contains at most $q^{5(1+2(d+1)h(\Phi))D^{2}c(\Phi)^{2}}$ elements. Thus we obtain %for $c_{3}=5d(1+2(d+1)h(\Phi))$
:\[[\frac{c_{3}D^{2}c(\Phi)^{2}}{d}]+1\leq 5(1+2(d+1)h(\Phi))D^{2}c(\Phi)^{2}\]which yields a contradiction and proves the statement by our choice of $c_{3}$. %We might also remark that $c_{2}$ can be expressed entirely as a function of $h(\Phi)$, of $d$ and $c(\Phi)$, as this is the case for $c_{3}$. We can then finally take:\[c_{2}=q^{5d(2(d+1)h(\Phi)+1)c(\Phi)^{2}}.\]
\end{proof}
\begin{prop}
Let $X$ be the number of monic, irreducible polynomials in $A$ with degree $N$, for $N\in\mathbb{N}\setminus\{0\}$. Then:\[\frac{1}{2}\frac{q^{N}}{N}\leq X\leq \frac{q^{N}}{N}.\]
\end{prop}
\begin{proof}
The exact value of $X$ as a function of $N$ is:\[X=1/N\sum_{d|N}\mu(N/d)q^{d}\]where $\mu$ is the Moebius function, see \cite{Irrose}, page 84. Therefore, for each $d|N$, $\mu(N/d)\leq1$ where\footnote{If $N=1$ we remark that $X=q$, which satisfies our statement.} $N\geq 2$:\[\left|1/N\sum_{d|N,d\neq N}\mu(\frac{N}{d})q^{d}\right|\leq\frac{1}{N}\sum_{i=1}^{[N/2]}q^{i}\leq\frac{1}{N}\frac{q}{q-1}(q^{N/2}-1)\leq\frac{1}{2}\frac{q^{N}}{N},\]as $q^{N/2}-1\leq\frac{q-1}{2q}q^{N}$ for each $q$ and $N$ as in the hypotheses. %Ce qui nous donne une première inégalité:\[X\leq\frac{3}{2}\frac{q^{N}}{N}.\]
Now, we have that:\[X=\left|\frac{q^{N}}{N}+\frac{1}{N}\sum_{d|N,d\neq N}\mu(\frac{N}{d})q^{d}\right|\]\[=\left|\frac{q^{N}}{N}-(-\frac{1}{N}\sum_{d|N,d\neq N}\mu(\frac{N}{d})q^{d})\right|\geq \left|\frac{q^{N}}{N}\right|-\left|1/N\sum_{d|N,d\neq N}\mu(\frac{N}{d})q^{d}\right|\geq\frac{q^{N}}{N}-\frac{1}{2}\frac{q^{N}}{N}=\frac{1}{2}\frac{q^{N}}{N},\]as a consequence of our previous estimate. To prove the other inequality we use an analogue of the factorization of the polynomial $T^{m}-1\in \mathbb{Q}[T]$ in cyclotomic polynomials with degree dividing $m$:\[T^{q^{N}}-T=\prod_{d|N}\phi_{d}(T)\]where $\phi_{d}(T)\in\mathbb{F}_{q}[T]$ is the product of the irreducible, monic polynomials with degree $d$. If we call $X_{d}$ the number of these ones, we have:\[\deg_{T}(\prod_{d|N}\phi_{d}(T))=NX+\sum_{d|N,d\neq N}dX_{d}=\deg_{T}(T^{q^{N}}-T).\]In particular, we have:\[X\leq\frac{q^{N}}{N}.\]
\end{proof}
%Une conséquence de la Proposition 4 est, comme déjà remarqué après la Définition 6, la possibilité de se réduire effectivement à considérer la situation $c_{1}=1/2$ dans le cas où $r=1$. En effet, si le nombre des polyn\^{o}mes irréductibles et unitaires de degré $N$ est au moins $q^{N}/2N$, ce sera le m\^{e}me à plus forte raison pour le nombre des polyn\^{o}mes irréductibles et unitaires de degré $\leq N$.\\\\
We remark that an immediate consequence of Proposition 3 is that the set of Drinfeld modules which are RV($r$) is empty if $r>1$.\\\\ %On remarque, d'ailleurs, comment une deuxième conséquence de cette Proposition est l'impossibilité de l'existence d'une hypothétique formulation de la condition RV($r$) pour un module de Drinfeld, dans le cas où $r>1$. Une telle condition entrerait immédiatement en conflit avec le fait que le nombre total des polyn\^{o}mes irréductibles et unitaires de degré $\leq N$ soit de l'ordre $q^{N}/N+q^{N-1}/(N-1)+...+q\leq q^{N}$, alors qu'un sous-ensemble à priori strict de celui ci aurait cardinalité d'ordre $q^{rN}/2N$, pour $r>1$, ce qui est impossible pour $N$ très grand.\\\\
We state now a key lemma, of primary importance for our argument, as we will see. This is the \textbf{Siegel Lemma}, and its proof is contained in \cite{Denis}:
\begin{lem}
Let $a_{j,i}$ ($1\leq i\leq N$, $1\leq j\leq M$) be elements of $\ov{k}$ generating a finite algebraic extension $\widetilde{k}/k$ having degree $D$. We assume that $N>MD$. Then there exist $x_{1}, ..., x_{N}\in A$, not all $0$, such that:\[\sum_{1\leq i\leq N}x_{i}a_{j,i}=0\]for each $1\leq j\leq M$, and such that\[\deg_{T}(x_{i})\leq\frac{D}{N-MD}\sum_{1\leq j\leq M}h(a_{j,1}, ..., a_{j,N})\]for each $1\leq i\leq N$.
\end{lem}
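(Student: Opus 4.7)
The plan is to establish the result via the classical Dirichlet box-principle in the function-field setting. The main idea is to convert the $M$ linear equations over $\widetilde{k}$ into a system of $MD$ linear equations over $k$ in the same $N$ unknowns, and then to compare the number of admissible inputs (vectors of $A^{N}$ of bounded $\deg_{T}$) with the number of admissible outputs.

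First I would fix a $k$-basis $\{e_{1},\ldots,e_{D}\}$ of $\widetilde{k}/k$ (for instance the powers of a primitive integral element) and expand each coefficient as $a_{j,i}=\sum_{l=1}^{D}a_{j,i,l}\,e_{l}$ with $a_{j,i,l}\in k$. The equation $\sum_{i}x_{i}a_{j,i}=0$ is then equivalent to the $D$ scalar equations $L_{j,l}(x):=\sum_{i}x_{i}a_{j,i,l}=0$ for $l=1,\ldots,D$, giving $MD$ linear forms over $k$ in the $N$ unknowns $x_{1},\ldots,x_{N}$.

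For each integer $H\geq 0$, the set $\mathcal{B}_{H}:=\{x\in A^{N}\colon \deg_{T}(x_{i})\leq H \text{ for all } i\}$ has cardinality exactly $q^{N(H+1)}$. The map $\Psi\colon x\mapsto(L_{j}(x))_{j=1,\ldots,M}$ sends $\mathcal{B}_{H}$ into $\widetilde{k}^{M}$; at each place $w$ of $\widetilde{k}$, the ultrametric triangle inequality gives $|L_{j}(x)|_{w}\leq \max_{i}|a_{j,i}|_{w}$ when $w$ is finite and $|L_{j}(x)|_{w}\leq q^{H}\max_{i}|a_{j,i}|_{w}$ when $w$ lies above $\infty$. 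Translating these local estimates into a global count by the very definition of the Weil height yields an upper bound of the shape $q^{MD(H+1)+D\sum_{j}h(a_{j,1},\ldots,a_{j,N})}$ for the cardinality of $\Psi(\mathcal{B}_{H})$. The pigeonhole principle then produces two distinct elements $x,x'\in\mathcal{B}_{H}$ with $\Psi(x)=\Psi(x')$ as soon as $N(H+1)>MD(H+1)+D\sum_{j}h(a_{j,1},\ldots,a_{j,N})$, that is, as soon as $H\geq\frac{D}{N-MD}\sum_{j}h(a_{j,1},\ldots,a_{j,N})$. The vector $y:=x-x'$ is then a nonzero element of $A^{N}$ with $\deg_{T}(y_{i})\leq H$ lying in the kernel of every $L_{j,l}$, and hence of every $L_{j}$, as required.

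The main obstacle I expect is keeping the constants sharp: the ``$D(H+1)$ possibilities per local component'' heuristic is really valid only at the infinite places, and a careful choice of integral basis, or equivalently of the local normalizations, is needed to ensure that denominators at the finite places do not introduce a spurious multiplicative factor in front of $\sum_{j}h(a_{j,1},\ldots,a_{j,N})$. The cleanest route is to reformulate the problem adelically and invoke a Riemann--Roch estimate on the smooth projective curve attached to $\widetilde{k}$, which directly yields a Bombieri--Vaaler-type inequality of the announced shape; alternatively one can carry out the finite-place bookkeeping explicitly, as is done in the proof referenced in \cite{Denis}.
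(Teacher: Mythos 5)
The paper itself does not prove this lemma; it refers to Denis \cite{Denis}. Your pigeonhole strategy is almost certainly the approach taken there, so at the level of the method you are aligned with the source.

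The genuine gap is the pivotal counting step: the bound $|\Psi(\mathcal{B}_{H})|\leq q^{MD(H+1)+D\sum_{j}h(a_{j,1},\ldots,a_{j,N})}$ is asserted, not established, and you flag this yourself. But the flag is not a proof: counting elements of $\widetilde{k}$ in the adelic box cut out by your local estimates requires Riemann--Roch on the curve underlying $\widetilde{k}$ (or, equivalently, the explicit denominator bookkeeping over an integral basis), and it is exactly there that the factor $D$ in front of $\sum_{j}h(a_{j,1},\ldots,a_{j,N})$ has to be extracted from the local degrees $n_{w}$. Without this the constant of the lemma is not verified, and with a poor choice of basis one would easily lose a term depending on the discriminant of $\widetilde{k}/k$. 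Two smaller slips: the pigeonhole condition $N(H+1)>MD(H+1)+D\sum_{j}h(a_{j,1},\ldots,a_{j,N})$ reads $H+1>\frac{D}{N-MD}\sum_{j}h(a_{j,1},\ldots,a_{j,N})$, so the correct choice is $H=\bigl\lfloor\frac{D}{N-MD}\sum_{j}h(a_{j,1},\ldots,a_{j,N})\bigr\rfloor$; your stated threshold $H\geq\frac{D}{N-MD}\sum_{j}h(a_{j,1},\ldots,a_{j,N})$ overshoots whenever the right-hand side is not an integer, and then $\deg_{T}(y_{i})\leq H$ no longer yields the inequality of the lemma. Finally, you first replace the $M$ equations over $\widetilde{k}$ by $MD$ equations over $k$ but then count the image of the $\widetilde{k}^{M}$-valued map $(L_{j})_{j}$; either bookkeeping can be made to work, but the two box estimates are not the same and should not be conflated.
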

\begin{lem}
Let $x\in\mathbb{D}(\ov{k})_{NT}$ with separable degree $D'_{sep.}$ over $k(\Phi)$, and let $\sigma_{1}, ..., \sigma_{D'_{sep.}}$ be the different embeddings of $k(\Phi)(x)$ in its algebraic closure in $\ov{k}$, fixing $k(\Phi)$. 
\begin{enumerate}
\item For each pair $(a,b)\in A^{2}$ such that $a/b\notin\mathbb{F}_{q}$, we have that:\[\sigma_{i}(\Phi(a)(x))\neq\sigma_{j}(\Phi(b)(x))\]for each pair $(i,j)\in\{1, ..., D'_{sep.}\}^{2}$.
	\item Let \textbf{M} be a subset of $A$ whose elements are pairwise coprime. Suppose that for each $a\in$ \textbf{M}, there exist $i\neq j$ in $\{1, ..., D'_{sep.}\}$ such that $\sigma_{i}(\Phi(a)(x))=\sigma_{j}(\Phi(a)(x))$. Then, the number of elements of \textbf{M} is less than $\log{D'_{sep.}}/\log{2}$.
\end{enumerate}
\end{lem}
\begin{proof}
\begin{enumerate}
	\item We consider without loss of generality that $\{\sigma_{1}, ..., \sigma_{D'_{sep.}}\}\subseteq Aut(k_{x}/k(\Phi))$, where $k_{x}$ is the normal closure of $k(\Phi)(x)$ in $\ov{k}$. If $\sigma_{i}(\Phi(a)(x))=\sigma_{j}(\Phi(b)(x))$ for some pair $(i,j)\in\{1, ..., D'_{sep.}\}^{2}$ and some $a$ and $b$ such that $a/b\notin\mathbb{F}_{q}$, $\Phi(a)(x)$ and $\Phi(b)(x)$ are conjugated over $k(\Phi)$, hence there exists $\sigma\in Aut(k_{x}/k(\Phi))$ %(où $k_{x}$ est la cl\^{o}ture de Galois de $k(\Phi)(x)/k(\Phi)$), 
	such that $\sigma(\Phi(a)(x))=\Phi(b)(x)$. As $Aut(k_{x}/k(\Phi))$ is a finite group, there exists $\mu\in \mathbb{N}\setminus\{0\}$ such that $\sigma^{\mu}=id_{k_{x}}$. We thus have that:\[\Phi(a^{\mu})(x)=\sigma^{\mu}(\Phi(a^{\mu})(x))=\Phi(a^{\mu-1})(\sigma^{\mu}(\Phi(a)(x)))=\Phi(a^{\mu-1})(\sigma^{\mu-1}(\Phi(b)(x)))\]\[=\Phi(a^{\mu-2})(\sigma^{\mu-1}\Phi(a)(\Phi(b)(x)))=\Phi(a^{\mu-2})(\sigma^{\mu-2}\Phi(b^{2})(x))=...=\Phi(b^{\mu})(x).\]Hence $\Phi(a^{\mu}-b^{\mu})(x)=0$. Since $x$ is not a torsion point, it follows that $a^{\mu}=b^{\mu}$, hence $a/b\in \mathbb{F}_{q}$. This contradicts the hypothesis.
	\item We take $a\in A$ and $j$ between $1$ and $D'_{sep.}$. Let:\[I(a,j)=\{i\in\{1, ..., D'_{sep.}\}/\sigma_{i}(\Phi(a)(x))=\sigma_{j}(\Phi(a)(x))\}.\]We have the following properties:
	
\begin{enumerate}
	\item $|I(a,j)|=|I(a,i)|$ for each pair $(i,j)\in\{1, ..., D'_{sep.}\}^{2}$ and two different sets of this form are disjoint.
	\item If $a$ and $b$ are coprime, $|I(a,i)\cap I(b,j)|\leq1$.
	\item If $a$ and $b$ are coprime, $|I(ab,j)|\geq |I(a,j)||I(b,j)|$.
\end{enumerate}
%Si $k(\Phi)(\Phi(a)(x))$ n'est pas dans un sous-corps strictement intermédiaire entre $k(\Phi)$ et $k(\Phi)(x)$ on a deux possibilités: si $\Phi(a)(x)\in k(\Phi)$, $I(a,j)$ est tout l'ensemble $\mathcal{I}(k(\Phi)(x)/k(\Phi)$ des $k(\Phi)-$plongements de $k(\Phi)(x)$ pour tout $j\in\{1, ..., D\}$; il est $1$ dans le cas où $k(\Phi)(\Phi(a)(x))=k(\Phi)(x)$. 
We start by proving the first point. If $i=j$ the statement is obvious. Let us assume $i\neq j$. We remark that for every $r\in I(a,i)$ there exists a unique $s\in \{1, ..., D'_{sep.}\}$ such that $\sigma_{r}^{-1}\sigma_{i}=\sigma_{s}^{-1}\sigma_{j}$. As $\sigma_{r}^{-1}\sigma_{i}(\Phi(a)(x))=\Phi(a)(x)=\sigma_{s}^{-1}\sigma_{j}(\Phi(a)(x))$, it is clear that $s\in I(a,j)$. The function $I(a,i)\to I(a,j)$ we have just constructed is moreover bijective: indeed, its inverse is defined in the same way in the opposite direction on the whole set $I(a,j)$. This shows that $|I(a,i)|=|I(a,j)|$. Lastly, if $I(a,i)\cap I(a,j)\neq \emptyset$, this immediately implies that $\sigma_{i}$ and $\sigma_{j}$ coincide on $k(\Phi(a)(x))$. %Therefore, $i\in I(a,j)$ and $j\in I(a,i)$. 
The map $I(a,i)\to I(a,j)$ defined above takes then values in $I(a,i)$: indeed, for every $r\in I(a,i)$, we have that $\sigma_{r}(\Phi(a)(x))=\sigma_{i}(\Phi(a)(x))=\sigma_{j}(\Phi(a)(x))=\sigma_{s}(\Phi(a)(x))$, hence $\sigma_{i}^{-1}\sigma_{s}(\Phi(a)(x))=\Phi(a)(x)$, which means that $s\in I(a,i)$. It follows that $I(a,j)\subset I(a,i)$, hence $I(a,j)=I(a,i)$ since these finite sets have the same number of elements.\\\\
%The first point is proved using elementary Field Theory. If $i\neq j$ in $\{1, ..., D'_{sep.}\}$, $r\in I(a,j)$ means that $\Phi(a)(x)\in k(\Phi)(x)^{\sigma_{r}^{-1}\sigma_{j}}$ and $k(\Phi)(\Phi(a)(x))=\cap_{r\in I(a,j)}k(\Phi)(x)^{\sigma_{r}^{-1}\sigma_{j}}$; $[k(\Phi)(x):k(\Phi)(\Phi(a)(x))]=|I(a,j)|$, which only depends on the chosen $j$. On the other hand, if $I(a,i)\cap I(a,j)\neq\emptyset$, then $i\in I(a,j)$ and vice-versa, which provides an equivalence relation between the elements of $\{1, ..., D'_{sep.}\}$.\\\\
We now prove the second point: if $l,m\in I(a,i)\cap I(b,j)$, $\sigma_{m}(\Phi(b)(x))=\sigma_{l}(\Phi(b)(x))$ and $\sigma_{m}(\Phi(a)(x))=\sigma_{l}(\Phi(a)(x))$, so by Bachet-Bézout Theorem, $\sigma_{m}(\Phi((a,b))(x))=\sigma_{l}(\Phi((a,b))(x))$ (where the notation $(a,b)$ is to indicate the greatest common divisor of $a$ and $b$ in $A$), and therefore, as $a$ and $b$ are coprimes, $\sigma_{m}(x)=\sigma_{l}(x)$, so $m=l$.\\\\ 
In order to prove the third point, we first notice that the following inequality holds: $|I(ab,j)|\geq|\cup_{i\in I(a,j)}I(b,i)|$. Indeed, let $i\in I(a,j)$ and $l\in I(b,i)$. We have $\sigma_{l}(\Phi(b)(x))=\sigma_{i}(\Phi(b)(x))$, so $\sigma_{l}(\Phi(ab)(x))=\sigma_{i}(\Phi(ab)(x))=\sigma_{j}(\Phi(ab)(x))$. This shows that:\[\cup_{i\in I(a,j)}I(b,i)\subset I(ab,j),\]hence the above inequality. The two previous points now imply the point c.\\\\
Now, if we take $\textbf{M}$ as in the hypotheses, it follows that, for each $a\in \textbf{M}$ we have $|I(a,i)|\geq 2$ for each $i\in \{1, ..., D'_{sep.}\}$. Therefore:\[2^{|\textbf{M}|}\leq\prod_{a\in\textbf{M}}|I(a,i)|\leq|I(\prod_{a\in\textbf{M}}a,i)|\leq D'_{sep.}\]and:\[|\textbf{M}|\leq\frac{\log{D'_{sep.}}}{\log{2}}.\]
\end{enumerate}
\end{proof}
%\begin{cor}
%La cardinalité de \textbf{M} est donc majorée par $\log{D}/\log{2}$, puisque $D'=[k(\Phi)(x):k(\Phi)]\leq [k(x):k] \leq D$. De la m\^{e}me manière, il faut se souvenir du fait que les conjugués distincts de chaque $\Phi(a)(x)$, pour $a$ qui varie dans $A\setminus\{0\}$, sont $D'$.
%\end{cor}
\section{Proof of Theorem 2}
We consider from now on a Drinfeld module $\mathbb{D}=(\mathbb{G}_{a},\Phi)$ which is RV($r$) for $r\in ]0,1]$. We first prove Theorem 2 assuming the RV($r$) hypothesis, then we will complete the proof to the RV($r$)$^{*}$ case by a slight modification of the last passages.\\\\%We shall consider here just the general (non separable) case, the passages being exactly the same in the separable situation, which is obviously less interesting. We just send the reader to \cite{D} for an exhaustive proof of Theorem 1 after having proved the Theorem 2, as such a proof follows the same steps.\\\\
\textbf{Note:} From now on we let $x$ be an element of $\mathbb{D}(\ov{k})_{NT}$. If $D<q^{q+d+1}$, then Lemma 2 yields Theorem 2. Indeed, as it is easy to check, by Lemma 2 we have $\widehat{h}(x)\geq \frac{1}{c_{2}^{(q^{q+d+1}-1)^{2}}}$ and as $C_{0}\leq c_{2}^{-(q^{q+d+1}-1)^{2}}$, %(by definition, $C_{0}$ is the minimum between such a value and another one), 
this yields $\widehat{h}(x)\geq C_{0}\geq C$ for all $C$ as in Theorem 2. Now, as $r\leq 1$, it follows that $dh(\Phi)c(\Phi)/r$ is always $\geq 1$. Thus $\mu\leq \kappa$, where $\mu$ and $\kappa$ are as in Theorem 2. We then conclude that $\frac{(\log\log_{+}D)^{\mu}}{D(\log_{+}D)^{\kappa}}\leq 1$ for all $D<q^{q+d+1}$ (where $\log_{+}(\cdot)$ and $\log\log_{+}(\cdot)$ take values $\geq 1$). Hence in what follows we will assume $D\geq q^{q+d+1}$. This will considerably ease many of the technicalities in our computations, as we will see.\\\\
To prove Theorem 2, we will argue by contradiction. We start therefore by assuming the following hypothesis which we want to contradict:%Our proof strategy is based on a "reductio ad absurdum". We assume first that Theorem 2 is false, then we show that such an assumption necessarily yields a contradiction. Lastly, we will explicitly compute the constants. We start therefore by assuming the following hypothesis:
\begin{hyp}
%Given $x\in\mathbb{D}(\ov{k})_{NT}$ with algebraic degree $D$ over $k$ such that $D\geq q^{q+d+1}$, therefore
We have:\[\widehat{h}(x)<C_{0}\frac{(\log\log{D})^{\mu}}{D{D}_{p.i.}^{\lambda}(\log{D})^{\kappa}}\]where $C_{0},\mu,\lambda,\kappa$ are defined as in Theorem 2. %and: \begin{equation} \kappa:=2+\frac{d}{r}h(\Phi)c(\Phi); \label{eq:5} \end{equation} \begin{equation} \mu:=1+\frac{2d}{r}h(\Phi)c(\Phi); \label{eq:6} \end{equation}\begin{equation}\lambda:=1+\frac{2d}{r}h(\Phi)c(\Phi). \label{eq:7} \end{equation}
\end{hyp}
%The reason for which we also pose the condition that $D\geq q^{q+d+1}$ will be clarified during this paragraph. Essentially, assuming such a condition will considerably ease many of the technicalities in our computations. As we will see later, the definition of $C_{0}$ we gave in Theorem 2 takes care by Lemma 2 of the case when $D< q^{q+d+1}$ providing therefore a complete proof of Theorem 2 for all $D\geq 1$. Note that our choice $D\geq q^{q+d+1}$ implies $\log\log_{+}{D}=\log\log{D}\geq 1$.\\\\%  which will be necessary in order to avoid a lot of technical problems (as we will see), but which could be assumed without loss of generality (with respect to our announced explicitation of the constants) by Lemma 2, as we will show.\\\\
%Supposons $x$ séparable sur $k$. On remplacera dorénavant $\ov{k}$ par $k^{sep}$ cl\^{o}ture séparable de $k$ dans $\ov{k}$ et $D$ sera le degré séparable de $x$ sur $k$. La notation $\mathbb{D}(k^{sep})_{NT}$ indiquera aussi les points pas de torsion du module ed Drinfeld $\mathbb{D}$ algébriques séparables sur $k$.
We will proceed by the following steps.

\begin{enumerate}
	\item We build an \textsl{auxiliary polynomial} with coefficients in $k(\Phi)$, vanishing with a certain multiplicity $t$ in $x$. By Siegel Lemma we will be able to bound the coefficients in some explicit way.
	\item We show that for some specific $h<t$, the auxiliary polynomial vanishes at $\Phi(l)(x)$ with multiplicity at least $h$, for every $l\in A\setminus \mathbb{F}_{q}$ monic irreducible of some specific degree satisfying the RV condition. The proof of this fact is the real heart of the whole section and will be accomplished arguing again by contradiction. Assuming that there is an $l\in S(A)$ as above and such that our polynomial vanishes at $\Phi(l)(x)$ with multiplicity $h'<h$, we show that this contradicts Siegel's Lemma.
	\item We will thus have that the sum of multiplicities of the roots $\Phi(l)(x)$ of our auxiliary polynomial for all $l\in S(A)$ satisfying the previous conditions (plus the case $l=1$), is at least $h$ times the cardinality of such a subset of $S(A)$. This will imply by a suitable choice of $h$ and $t$ that this number exceeds the degree of the auxiliary polynomial, yielding a contradiction. %And this using the RV($r$) condition that we have assumed on $\mathbb{D}$.
\end{enumerate}

\textbf{For the rest of the whole section we will assume Hypothesis 1.}

\subsection{Step 1 - Construction of the auxiliary polynomial}
\begin{de}
Given a polynomial $f(X)\in K[X]$ with $K$ any field of characteristic $p$, we call \textbf{hyperderivative} of $f$ of order $h$ the polynomial $d^{(h)}f(X)\in K[X]$ obtained as the coefficient of the term $H^{h}$ of $f(X+H)\in K[X][H]$, for some new parameter $H$.
%Let $K$ be a complete non-Archimedean valuation field. A map:\[f:K\to K\]is \textbf{pseudoanalytic over $K$} if and only if, for each $z_{0}\in K$, there exists a neighborhood $U_{z_{0}}\subseteq K$ of such a point, such that, for each $z\in U_{z_{0}}$, we have that:\[f(z)=\sum_{i\geq 0}a_{i}(z-z_{0})^{i}.\]If for each $z_{0}\in K$ we have that $U_{z_{0}}=K$ we say that $f$ is an \textbf{entire function}.\\\\
%The \textbf{hyperderivative} of order $h$ of a pseudoanalytic function $f:K\to K$, defined over a $K$ endowed with some metric and having characteristic $p>0$, is the pseudoanalytic function $d^{(h)}:K\to K$ which corresponds to the coefficient of the term $H^{h}$, where $H\in K$ is some parameter, in the development in formal power series in $H$ of $f(z+H)$.
\end{de}
%Rappelons que dans notre situation, bien que la définition et les principales propriétés de la continuité d'une fonction soient les m\^{e}mes que dans le cas plus connus de l'Analyse Réelle, toute fonction dérivable (soit, différentiable) est pseudoanalyique de façon automatique, comme on a justement besoin d'un développement en série de puissances de la fonction en n'importe le quel point pour en définir la (?)%
\begin{rem}
Let $A(X)\in \ov{k}[X]$. An element $x\in \ov{k}$ is a root of $A(X)$ of multiplicity at least $h\geq 1$ if and only if $d^{(h')}A(x)=0$ for each $h'=0, ..., h-1$.
\end{rem}
\begin{proof}
See \cite{D}, Remarque 1.3.4.
\end{proof}
We call $p^{e}$ the purely inseparable degree $D'_{p.i.}$ of $k(\Phi)(x)$ over $k(\Phi)$.\\\\
The following Proposition provides the explicit construction of the auxiliary polynomial we will use in our proof. %Let $A[\Phi]$ be the ring of integers of $k(\Phi)$. 
\begin{prop}
Let $L,t',t\in \mathbb{N}$ such that:\[t'=tp^{e}\]and:\[L^{2}>tDc(\Phi).\]Let $N\in A$ be such that:\begin{equation}\deg_{T}(N)=\left[\frac{1}{d}\log{L}\right]+1.\label{eq:8} \end{equation}Then there exists a polynomial:\[G(X,Y)=\sum_{i=0}^{L-1}\sum_{j=0}^{L-1}p_{ij}X^{i}Y^{j} \in A[X,Y]\setminus\{0\}\]such that:\[G_{N}(X):=G(X,\Phi(N)(X))\in k(\Phi)[X]\]is not identically 0, vanishes at $x$ with multiplicity at least $t'$ and such that the coefficients $p_{ij}\in A$ of $G(X,Y)$ satisfy the following condition:\[\deg_{T}(p_{ij})\leq \frac{Dc(\Phi)}{L^{2}-tDc(\Phi)}\Sigma\]for each $0\leq i,j\leq L-1$, where $\Sigma$ is the sum of the heights of all vectors which are the lines of the coefficient matrix of the linear system:\begin{equation} d^{(hp^{e})}G_{N}(x)=0 \label{eq:n} \end{equation} for $h=0, ..., t-1$, whose unknowns are precisely the coefficients of $G(X,Y)$.
\end{prop}
\begin{proof}
Let us write:\[G(X,Y)=\sum_{i=0}^{L-1}\sum_{j=0}^{L-1}p_{ij}X^{i}Y^{j}.\]We choose an element $N\in A\setminus\{0\}$ such that (\ref{eq:8}) holds. Therefore, as $q^{d\deg_{T}(N)}>L-1$, it follows that $G_{N}$ is not identically $0$ in $k(\Phi)[X]$ as the algebraic variety of equation $Y=\Phi(N)(X)$ in $\mathcal{C}^{2}$ is not contained in the zero locus of $G(X,Y)$. Indeed:\[Y-\Phi(N)(X)\nmid G(X,Y)\]in $k(\Phi)[X,Y]$. %If we set $q^{d\deg_{T}(N)}>L-1$, the request is oviously satisfied. We pose therefore: \begin{equation} \deg_{T}(N):=[\frac{1}{d}\log{L}]+1. \label{eq:8} \end{equation} 
%We thus have: \begin{equation} L\leq q^{d\deg_{T}(N)}\leq q^{d}L. \label{eq:9} \end{equation} 
Now, the requirement that $G_{N}(X)$ vanishes at $x$ with order $t'$ means that we have to take the coefficients of $G_{N}(X)$ in the space of solutions of the linear system of $L^{2}$ unknowns and $t'$ conditions, given by the vanishing of the hyperderivatives of $G_{N}(X)$ at $x$ with order less than $t'$. We now show that the number of such conditions may actually be taken to be at most $t$. Indeed, if $x$ is a root of $G_{N}(X)$ we have as a first condition that:\[G_{N}(x)=0.\]This is a linear equation of $L^{2}$ unknowns and implies that:\[\Delta(X)|G_{N}(X)\]where $\Delta(X)\in k(\Phi)[X]$ is the minimal polynomial of $x$ over $k(\Phi)$. As the purely inseparable degree of $x$ over $k(\Phi)$ is $D'_{p.i.}=p^{e}$, $x$ is a root of $\Delta(X)$ of order $p^{e}$. Therefore, it is also a root of $G_{N}(X)$ with at least the same multiplicity. The single vanishing condition of $G_{N}(X)$ at $x$ implies therefore that the other $p^{e}-1$ conditions:\[d^{(h)}G_{N}(x)=0\]with $h\leq p^{e}-1$, are satisfied as well. In the same way, intersecting such a space of solutions with the one given by the linear equation:\[d^{(p^{e})}G_{N}(x)=0\]means that:\[\Delta(X)|\frac{G_{N}(X)}{\Delta(X)}\]in $k(\Phi)[X]$ as, by Remark 1, such an intersection implies that $x$ is a root of $G_{N}(X)$ with order at least $p^{e}+1$, while the roots of $\Delta(X)$ have multiplicity $p^{e}$. As:\[\Delta(X)^{2}|G_{N}(X)\]one actually has that the $p^{e}-1$ conditions:\[d^{(h)}G_{N}(x)=0\]with $h=p^{e}+1, ..., 2p^{e}-1$ follow directly from the first one $d^{(p^{e})}G_{N}(x)=0$. Now, repeating the same passages for each condition $G_{N}(x)=0$, $d^{(p^{e})}G_{N}(x)=0$, ..., $d^{((t-1)p^{e})}G_{N}(x)=0$, the linear system we have actually to solve takes the shape:\[d^{(hp^{e})}G_{N}(x)=0\]for $h=0, ..., t-1$ and it is equivalent to that of the form $d^{(h)}G_{N}(x)=0$ for each $h=0, ..., t'-1$.

%Now, to impose that $G_{N}$ annihilates over $x$ with multiplicity $\geq t'$ is equivalent to say that the hyperderivative $G_{N}$ with order $hp^{e}$, for each $0\leq h\leq t-1$, annihilates in $x$. We explain now the reason of this fact. If $G_{N}(x)=0$, this means that the minimal polynomial $\Delta(X)\in k(\Phi)[X]$ of $x$ divides $G_{N}(X)$ in $k(\Phi)[X]$. As $p^{e}=[k(x,\Phi):k(\Phi)]_{p.i.}$, we have that $x$ is root of $\Delta(X)$ with multiplicity at least $p^{e}$, and clearly the same holds for $G_{N}(X)$ too. We thus don't need any more conditions on the derivatives of $G_{N}(X)$ until the order $p^{e}-1$, as they are already implied by the first one. Proceeding in the same way we get that the set of the conditions that $G_{N}^{(hp^{e})}(x)=0$, implies for each $h=0, ..., t-1$ that $x$ is a root of $G_{N}(X)$ with multiplicity at least $(h+1)p^{e}$. 
We therefore obtain a linear system with $t$ conditions, whose unknowns are the $L^{2}$ coefficients of $G$. Thus, if we set $L^{2}>Dc(\Phi)t$, where $Dc(\Phi)\geq[k(\Phi)(x):k]$, the conditions of Lemma 3 are satisfied. If we choose $\Sigma$ as in the hypothesis the statement is proved.
\end{proof}
%La stratégie qu'on se propose de suivre pour arriver à une contradiction consiste à montrer que le polyn\^{o}me $G_{N}(X)$, pas nul par hypothèse, a pourtant plus de zéros qu'il pourrait en avoir.\\\\ %On veut arriver à ceci par contradiction: avoir ce polyn\^{o}me différent de $0$ sur les points qui seront candidats à en \^{e}tre zéros, donnera forcément une double estimation de sa hauteur en ces points, à la fois par le haut et par le bas, qui ne peut pas avoir lieu, suite au conditions auquelles la fonction polynomielle est soumise par les conséquences du Lemme de Siegel.\\\\
\subsection{Step 2 - Vanishing of $G_{N}(X)$ with desired multiplicity at $\Phi(l)(x)$ for $l$ RV in $S(A)$}
From now on, we choose the parameters $L$, $t$ and $h$ as follows (where $c_{0}$ is as in Theorem 2)\footnote{Such choices of the parameters have been suggested by Hugues Bauchère (LMNO, Caen), \cite{13}.}: \begin{equation} L:=\left[c_{0}^{2}\frac{D\log{D}}{(\log\log{D})^{2}}p^{e}\right]+1; \label{eq:14} \end{equation} \begin{equation} t:=\left[c_{0}^{3}\frac{D\log{D}}{(\log\log{D})^{3}}p^{e}\right]; \label{eq:15} \end{equation} \begin{equation} h:=\left[c_{0}\frac{D}{(\log\log{D})^{2}}\right]. \label{eq:16}\end{equation} Let us take, as before:\[t'=tp^{e}.\]
\begin{lem}
With the above choice of the parameters we have:\[L^{2}-tDc(\Phi)\geq \frac{1}{2}L^{2}.\]In particular, the hypothesis of Proposition 4 is satisfied.
\end{lem}
\begin{proof}
We know that:\[c_{0}\geq c(\Phi).\]Now, we state that: $\frac{1}{2}L^{2}-tDc(\Phi)\geq 0$. Indeed, as:\[L^{2}\geq c_{0}^{4}\frac{D^{2}(\log{D})^{2}p^{2e}}{(\log\log{D})^{4}}\]and:\[tDc(\Phi)\leq c_{0}^{4}\frac{D^{2}\log{D}}{(\log\log{D})^{3}}p^{2e}\]we have that:\[\frac{1}{2}L^{2}-tDc(\Phi)\geq c_{0}^{4}D^{2}p^{2e}\log{D}\left(\frac{\frac{1}{2}\log{D}-\log\log{D}}{(\log\log{D})^{4}}\right).\]If $D\geq q^{q+d+1}$ the right-hand term of such an inequality is not negative if and only if:\[\frac{1}{2}\log{D}\geq \log\log{D}\]which is easy to see to be always verified when $D\geq q^{q+d+1}$. 
\end{proof}
By Proposition 4, we can construct a polynomial:\[G_{N}(X)=\sum_{i=0}^{L-1}\sum_{j=0}^{L-1}p_{ij}X^{i}(\Phi(N)(X))^{j}.\]By Remark 1 we can say that $x$ is a root of multiplicity at least $t'-hp^{e}$ of $d^{(hp^{e})}G_{N}(X)$ for $h=0, ..., t-1$. For a general $h\leq t-1$ we thus have the following decomposition:\[d^{(hp^{e})}G_{N}(X)=\Delta(X)^{t-h}R_{h}(X)\]where $\Delta(X)\in k(\Phi)[X]$ is the minimal polynomial of $x$ over $k(\Phi)$.\\\\
The goal of this subsection is to prove the following proposition.
\begin{prop}
Let $l\in S(A)$ satisfying the RV property and such that: \begin{equation} \deg_{T}(l):=h(\Phi)c(\Phi)\left[\frac{1}{r}\log\left(c_{0}^{4}\frac{(\log{D})^{3}p^{2e}}{\log\log{D}}\right)\right]. \label{eq:17} \end{equation}%:\[\deg_{T}(l):=h(\Phi)c(\Phi)\left[\frac{1}{r}\log\left(c_{0}^{4}\frac{(\log{D})^{3}p^{2e}}{\log\log{D}}\right)\right].\]
Then we have:\[d^{(h'p^{e})}G_{N}(\Phi(l)(x))=0\]for each $0\leq h'\leq h-1$.
\end{prop}
We will argue by contradiction. Assuming the conclusion of Proposition 5 is false, we will follow three steps:
\begin{enumerate}
	\item We provide an upper bound for the logarithmic height of $d^{(h'p^{e})}G_{N}(\Phi(l)(x))$.
	\item We prove a lower bound of the same quantity, using our assumption that $l$ satisfies the RV condition.
	\item We show that these two inequalities yield a contradition.
\end{enumerate}
% In the first one we find an upper bound of the logarithmic height of $G_{N}^{(h')}(\Phi(l)(x))$ using the inequality in Hypothesis 1. In the second step we prove a lower bound for the same quantity, using our assumption that $l$ satisfies the RV condition. We will finally show in the third step that these two inequalities produce a contradition.\\\\
Given $v_{l}$ a place of $k$ associated to an irreducible element $l\in A\setminus\{0\}$, we write $w|v_{l}$ to say that a place $w$ extends $v_{l}$ to $k(\Phi)(x)$.
\begin{prop}
In order to prove Theorem 2, we may assume that for all $l\in S(A)$ satisfying the RV condition and for all $w|v_{l}$ we have $w(x)\geq 0$.
\end{prop}
\begin{proof}
We assume that there exists an $l\in S(A)$ which is RV and an extension $w_{0}|v_{l}$ such that $w_{0}(x)<0$. Let us write:\[\Phi(l)(x)=lx+\alpha_{1}x^{q}+...+\alpha_{d\deg_{T}(l)}x^{q^{d\deg_{T}(l)}}.\]The RV hypothesis on $l$ implies that $w_{0}(\alpha_{i})>0$ for each $i=0, ..., d\deg_{T}(l)-1$, while $w_{0}(\alpha_{d\deg_{T}(l)})=0$. Therefore, as $w_{0}(x)<0$, we have that:\[w_{0}(\alpha_{d\deg_{T}(l)}x^{q^{d\deg_{T}(l)}})=q^{d\deg_{T}(l)}w_{0}(x)< q^{i}w_{0}(x)< w_{0}(\alpha_{i})+w_{0}(x^{q^{i}})=w_{0}(\alpha_{i}x^{q^{i}})\]for each $i=0, ..., d\deg_{T}(l)-1$. The properties of a non-Archimedean valuation imply therefore that:\[w_{0}(\Phi(l)(x))=q^{d\deg_{T}(l)}w_{0}(x).\]Iterating until we replace $x$ by $\Phi(l^{n-1})(x)$, we get, for all $n\in \mathbb{N}$:\[w_{0}(\Phi(l^{n})(x))=q^{d\deg_{T}(l)n}w_{0}(x).\]We derive from this:\[h(\Phi(l^{n})(x))=\frac{1}{[k(\Phi)(x):k]}\sum_{w\texttt{ over }k(\Phi)(x)/k}n_{w}\max\{0, -w(\Phi(l^{n})(x))\}\]\[\geq \frac{n_{w_{0}}}{[k(\Phi)(x):k]}\max\{0, -w_{0}(\Phi(l^{n})(x))\}\]\[=\frac{n_{w_{0}}q^{d\deg_{T}(l)n}}{[k(\Phi)(x):k]}\max\{0, -w_{0}(x)\}.\]Since $-n_{w_{0}}w_{0}(x)\geq 1$, we immediately deduce that:\[\widehat{h}(x)=\lim_{n\to+\infty}q^{-d\deg_{T}(l)n}h(\Phi(l^{n})(x))\geq \frac{1}{Dc(\Phi)}\]since:\[[k(\Phi)(x):k]\leq Dc(\Phi)\](we recall that $D=[k(x):k]$). This immediately provides an even stronger statement than Theorem 2, so we can easily get rid of the assumption of $w_{0}(x)$ to be negative.
\end{proof}
\subsubsection{Upper bound for $h(d^{(h'p^{e})}G_{N}(\Phi(l)(x)))$}
%On commence par majorer la hauteur de $G_{N}^{(h')}(\Phi(l)(x))$. Pour cela, il faut d'abord comprendre l'expression précise de la dérivée divisée à l'ordre $h$ générique du polyn\^{o}me $G_{N}(X)\in k(\Phi)[X]\setminus\{0\}$, dans le but de pouvoir bénéficier de l'estimation de ses coefficients, dont on dispose suite au Lemme de Siegel.\\\\
\begin{prop}
Let $l\in S(A)$. For each integer $h'$ with $0\leq h'\leq t-1$, we have:\[h(d^{(h'p^{e})}G_{N}(\Phi(l)(x)))\leq h(p_{ij})+L[h(\Phi(l)(x))+h(\Phi(Nl)(x))]+\deg_{T}(N) h(\Phi)h'p^{e}.\]
\end{prop}
\begin{proof}
We recall that:\[G(X,Y)=\sum_{i=0}^{L-1}\sum_{j=0}^{L-1}p_{ij}X^{i}Y^{j}.\]By definition, the hyperderivative of $G_{N}(X)$ at $x$ having order $h'p^{e}$ is the coefficient of $H^{h'p^{e}}$, for a new indeterminate $H$, of the polynomial:\[G_{N}(X+H)=\]\[=\sum_{i=0}^{L-1}\sum_{j=0}^{L-1}p_{ij}(\sum_{a=0}^{i}\binom{i}{a}X^{i-a}H^{a})(\sum_{b=0}^{j}\binom{j}{b}(\Phi(N)(X))^{j-b}(\Phi(N)(H))^{b}).\]Let us write:\[\Phi(N)(H)=\sum_{s=0}^{d\deg_{T}(N)}\widetilde{a}_{s}H^{q^{s}},\]where $\widetilde{a}_{s}\in k(\Phi)$ for $s=0, ..., d\deg_{T}(N)$. %that one finds when explicitly computes the coefficients of the additive form $\Phi(N(T))(\tau)=N(\Phi(T))(\tau)\in\ov{k}\{\tau\}$ with degree $d\deg_{T}(N)$ in $\tau$. 
The hyperderivative $d^{(h'p^{e})}G_{N}(X)$ is a sum of a certain number of terms, which is the number of all the possible ways to obtain the power $H^{h'p^{e}}$ in the above expression of $G_{N}(X+H)$. Each of such terms takes the following shape:\[\sum_{i=a}^{L-1}\sum_{j=b}^{L-1}p_{ij}\binom{i}{a}\binom{j}{b}\binom{b}{n_{0}, ..., n_{d\deg_{T}(N)}}X^{i-a}(\Phi(N)(X))^{j-b}\prod_{s=0}^{d\deg_{T}(N)}\widetilde{a}_{s}^{n_{s}}\]\[=\sum_{i=a}^{L-1}\sum_{j=b}^{L-1}p_{ij}\binom{i}{a}\binom{j}{j-b,n_{0}, ..., n_{d\deg_{T}(N)}}X^{i-a}(\Phi(N)(X))^{j-b}\prod_{s=0}^{d\deg_{T}(N)}\widetilde{a}_{s}^{n_{s}}\]for each pair $(a,b)$ and each $(d\deg_{T}(N)+1)-$tuple $\ov{n}=(n_{0}, ..., n_{d\deg_{T}(N)})\in \mathbb{N}^{d\deg_{T}(N)+1}$ such that:\[b=\sum_{s=0}^{d\deg_{T}(N)}n_{s}\]and\[h'p^{e}=a+\sum_{s=0}^{d\deg_{T}(N)}n_{s}q^{s}.\]We thus obtain that $0\leq a\leq h'p^{e}$ and $0\leq \sum_{s=0}^{d\deg_{T}(N)}n_{s}q^{s}=h'p^{e}-a$.

%En théorie, $(a,b)\in\{0, ..., L-1\}^{2}$. Toutefois, nous allons montrer que le choix de $b$ est plus limitée. En particulier, comme chaque $n_{i}$ est compris entre $0$ et $q-1$,\[b\leq (q-1)(d\deg_{T}(N)+1)\leq 3(q-1)d\log{L}\]en effet, $d\deg_{T}(N)+1\leq d(\log{L}+1)+1\leq 3d\log{L}$.
For each pair $(i,j)\in\{0, ..., L-1\}^{2}$ the coefficient associated to $p_{ij}$ in the linear system (\ref{eq:n}) introduced in Proposition 4 is then:\[\sum_{(a,b,\ov{n})\in\mathcal{I}(i,j,h')}\binom{i}{a}\binom{j}{j-b,n_{0}, ..., n_{d\deg_{T}(N)}}x^{i-a}(\Phi(N)(x))^{j-b}\prod_{s=0}^{d\deg_{T}(N)}\widetilde{a}_{s}^{n_{s}},\] where we define the set $\mathcal{I}(i,j,h')$ as follows:\[\mathcal{I}(i,j,h'):=\{(a,b,n_{0}, ..., n_{d\deg_{T}(N)})\in \mathbb{N}^{d\deg_{T}(N)+3}, 0\leq a\leq \min\{i,h'p^{e}\},\]\[,a+\sum_{s=0}^{d\deg_{T}(N)}n_{s}q^{s}=h'p^{e}, \sum_{s=0}^{d\deg_{T}(N)}n_{s}=b\}.\]The height of the $h'-$th line $L_{h'}$ of the system (\ref{eq:n}) is thus:\[h(L_{h'}):=h(\{\sum_{(a,b,\ov{n})\in\mathcal{I}(i,j,h')}\binom{i}{a}\binom{j}{j-b,n_{0}, ..., n_{d\deg_{T}(N)}}x^{i-a}(\Phi(N)(x))^{j-b}\prod_{s=0}^{d\deg_{T}(N)}\widetilde{a}_{s}^{n_{s}}\}_{(i,j)}).\]%\[=\frac{1}{[k(\Phi)(x):k]}\sum_{v\texttt{ sur }k(\Phi)(x)/k}n_{v}\max_{(i,j)}\{0,-v(\sum_{(a,b,\ov{n})\in\mathcal{I}(i,j,h)}x^{i-a}(\Phi(N)(x))^{j-b}\prod_{i=0}^{d\deg_{T}(N)}\widetilde{a}_{i}^{n_{i}})\}.\]
The property (\ref{eq:3}) of Proposition 1 leads us to the following upper bound of $h(L_{h'})$:\[h(L_{h'})\leq h(\{x^{i-a}(\Phi(N)(x))^{j-b}\prod_{s=0}^{d\deg_{T}(N)}\widetilde{a}_{s}^{n_{s}}\}_{(i,j,a,b,\ov{n})})\]\[=\frac{1}{[k(\Phi)(x):k]}\sum_{v\texttt{ over }k(\Phi)(x)/k}n_{v}\max_{(i,j,a,b,\ov{n})}\{0,-v(x^{i-a}(\Phi(N)(x))^{j-b}\prod_{s=0}^{d\deg_{T}(N)}\widetilde{a}_{s}^{n_{s}})\}.\]In other words, for each choice of $i,j=0, ..., L-1$, $a=0, ..., \min\{i,h'p^{e}\}$ and $b$ such that $h'p^{e}=a+\sum_{s=0}^{d\deg_{T}(N)}n_{s}q^{s}$, we multiply $x^{i-a}(\Phi(N)(x))^{j-b}$ by each one of the elements $\prod_{s=0}^{d\deg_{T}(N)}\widetilde{a}_{s}^{n_{s}}$ appearing for each $\ov{n}$ such that $\sum_{s=0}^{d\deg_{T}(N)}n_{s}=b$ and $h'p^{e}=a+\sum_{s=0}^{d\deg_{T}(N)}n_{s}q^{s}$. Therefore, by calling $\ov{\alpha}:=\{x^{i-a}(\Phi(N)(x))^{j-b}\}_{(i,j,a,b,\ov{n})}$ (note that for each multi-index $(i,j,a,b)$ the entry $x^{i-a}(\Phi(N)(x))^{j-b}$ appears a number of times which is precisely the cardinality of the set of the $\ov{n}$ associated to $(i,j,a,b)$), and calling:\[\ov{\beta}:=\{(\prod_{s=0}^{d\deg_{T}(N)}\widetilde{a}_{s}^{n_{s}})\}_{(i,j,a,b,\ov{n})}\]we have:\[\ov{\alpha}*\ov{\beta}=\{x^{i-a}(\Phi(N)(x))^{j-b}\prod_{s=0}^{d\deg_{T}(N)}\widetilde{a}_{s}^{n_{s}}\}_{(i,j,a,b,\ov{n})},\]whose height we are analysing. The product law (\ref{eq:2}) provides thus the following inequality:\[h(L_{h'})\leq h(\{x^{i-a}(\Phi(N)(x))^{j-b}\})+h(\{\prod_{s=0}^{d\deg_{T}(N)}\widetilde{a}_{s}^{n_{s}}\}).\]By the properties of the logarithmic height, the first term is bounded as follows:\[h(\{x^{i-a}(\Phi(N)(x))^{j-b}\})\leq L[h(x)+h(\Phi(N)(x))].\]We search now for an upper bound of the second term too. Writing $N(T)=
\alpha_{0}+\alpha_{1}T+...+\alpha_{\deg_{T}(N)}T^{\deg_{T}(N)}\in A$, we have:\[\Phi(N(T))=N(\Phi(T))=\alpha_{0}+\alpha_{1}\Phi(T)+...+\alpha_{\deg_{T}(N)}\Phi(T)^{\deg_{T}(N)}.\]We now focus on the height of each monomial. For $0\leq\delta\leq\deg_{T}(N)$:\[\Phi(T)^{\delta}=\sum_{i=0}^{d\delta}(\sum_{\ov{j}\in\Delta_{\delta}(i)}\prod_{s=1}^{\delta}a_{j(s)}^{q^{\sum_{\nu=0}^{s-1}j(\nu)}})\tau^{i}\]where:\[\Delta_{\delta}(i):=\{(j(1), ..., j(\delta))\in \mathbb{N}^{\delta};\sum_{s=1}^{\delta}j(s)=i\}\]with $j(s)\in\{0, ..., d\}$, $j(0):=0$. %In fact, if we take the power:\[\Phi(T)^{\delta}=(T+a_{1}\tau+...+a_{d}\tau^{d})(T+a_{1}\tau+...+a_{d}\tau^{d})...(T+a_{1}\tau+...+a_{d}\tau^{d})\]and develop the product, we get the coefficient of $\tau^{i}$ for each $i=0, ..., d\delta$ multiplying each other, in all the ways such that $\sum_{s=1}^{\delta}j(s)=i$, the coefficients $a_{j(s)}$ for each fixed $s$, once they have been submitted to the action of the powers $j(0), ..., j(s-1)$ of $\tau$, where we impose that $j(0):=0$. %As in fact $a_{j(s)}$ is, within its own $s-$th term, submitted just to the action of the identity $\tau^{0}$, by such an action it appears in the product with exponent $1$, which we represent by the case of $s=0$ in order to make the formula compatible with each possible situation. In fact, $a_{j(1)}$ is just submitted, in the product, to the action of $\tau^{0}$, which gives it the shape $a_{j(1)}^{q^{j(0)}}=a_{j(1)}$. 
Recalling that $\widetilde{a}_{i}$ is the coefficient of $\tau^{i}$ in the expression of $\Phi(N)$, we obtain, for all places $w$ of $k(\Phi)$:\[-w(\widetilde{a}_{i})\leq\max\{\sum_{s=1}^{\delta}-q^{\sum_{\nu=0}^{s-1}j(\nu)}w(a_{j(s)})\}\leq \delta q^{i}\max_{j=0, ..., d}\{-w(a_{j})\}.\]Therefore:\[h(\{\prod_{s=0}^{d\deg_{T}(N)}\widetilde{a}_{s}^{n_{s}}\}_{(i,j,a,b,\ov{n})})=\frac{1}{c(\Phi)}\sum_{w\texttt{ over }k(\Phi)/k}n_{w}\max\{0,-\sum_{s=0}^{d\deg_{T}(N)}n_{s}w(\widetilde{a}_{s})\}\]\[\leq\frac{1}{c(\Phi)}\sum_{w\texttt{ over }k(\Phi)/k}n_{w}\sum_{s=0}^{d\deg_{T}(N)}\max\{0,-n_{s}w(\widetilde{a}_{s})\}\]\[\leq \frac{1}{c(\Phi)}\sum_{w\texttt{ over }k(\Phi)/k}n_{w}\sum_{s=0}^{d\deg_{T}(N)}n_{s} \deg_{T}(N) q^{s}\max_{j=0, ..., d}\{0,-w(a_{j})\}\leq \deg_{T}(N) h(\Phi)h'p^{e}.\]
%En conclusion:\[h(\Phi(T)(\tau)^{\delta})=\frac{1}{c(\Phi)}\sum_{w\texttt{ sur }k(\Phi)/k}n_{w}\max\{-w(\widetilde{a}_{i})\}=\frac{1}{c(\Phi)}\sum_{w\texttt{ sur }k(\Phi)/k}n_{w}\max\{\sum_{s=1}^{\delta}q^{j(s-1)}-w(a_{j(s)})\}\leq\]\[\leq\frac{1}{c(\Phi)}\sum_{w\texttt{ sur }k(\Phi)/k}n_{w}\sum_{s=1}^{\delta}q^{j(s-1)}\max\{-w(a_{j(s)})\}\leq h(\Phi(T)(\tau)).\]Or, se souvenant du fait que:\[h(\Phi(N)(T))=h(\cup_{\delta=0}^{\deg_{T}(N)}\Phi(T)^{\delta})\]on conclut en remarquant que la majoration cherchée de $h(\Phi(N)(T))$ est bien celle du terme donné par la valeur $\delta=\deg_{T}(N)$. Donc, en conclusion:\[h(\Phi(N)(T))\leq(1+q^{d}+...+q^{d(\deg_{T}(N)-1)})h(\Phi(T)).\]D'où l'estimation cherchée du deuxième terme:\[h(\{\prod_{i=0}^{d\deg_{T}(N)}\widetilde{a}_{i}^{n_{i}}\})\leq(q-1)(d\deg_{T}(N)+1)(1+q^{d}+...+q^{d(\deg_{T}(N)-1)})h(\Phi(T))\leq \frac{q-1}{q^{d}-1}(q^{d}L-1)3d\log{L}h(\Phi(T))\leq\]\[\leq 3qdL\log{L}h(\Phi(T)).\]
Finally, for each $h'=0, ..., t-1$: \begin{equation} h(L_{h'})\leq L[h(x)+h(\Phi(N)(x))]+\deg_{T}(N) h(\Phi)h'p^{e}. \label{eq:10} \end{equation} Now:\[d^{(h'p^{e})}G_{N}(\Phi(l)(x))\]\[=\sum_{i=0}^{L-1}\sum_{j=0}^{L-1}p_{ij}\sum_{(a,b,\ov{n})\in\mathcal{I}(i,j,h')}\binom{i}{a}\binom{j}{j-b,n_{0}, ..., n_{d\deg_{T}(N)}}(\Phi(l)(x))^{i-a}(\Phi(Nl)(x))^{j-b}\prod_{i=0}^{d\deg_{T}(N)}\widetilde{a}_{i}^{n_{i}}.\]From this formula and the previous computations (applied to $\Phi(l)(x)$ instead of $x$), we obtain the bound of the Proposition. %for each $0\leq h'\leq h-1$ (where $1\leq h'\leq t$), as a consequence of the previous computations. Therefore:\[h(d^{(h'p^{e})}G_{N}(\Phi(l)(x)))=h(\sum_{i=0}^{L-1}\sum_{j=0}^{L-1}p_{ij}\sum_{(a,b,\ov{n})\in\mathcal{I}(i,j,h')}(\Phi(l)(x))^{i-a}(\Phi(Nl)(x))^{j-b}\prod_{s=0}^{d\deg_{T}(N)}\widetilde{a}_{s}^{n_{s}}).\]Therefore:\[h(d^{(h'p^{e})}G_{N}(\Phi(l)(x)))\leq h(p_{ij})+L[h(\Phi(l)(x))+h(\Phi(Nl)(x))]+\deg_{T}(N) h(\Phi)h'p^{e}.\]
\end{proof}
%On est maintenant face à deux situations possibles: 
%\begin{enumerate}
%	\item $w(x)\geq 0$ pour chaque $w$ extension de la place $v_{l}$, où $v_{l}$ est la place $l-$adique sur $A$, à $k(\Phi)(x)$ (on indique le fait que $w$ étende $v_{l}$ à $k(\Phi)(x)$ avec la notation $w|v_{l}$);
%	\item Il existe au moins une place $w|v_{l}$ sur $k(\Phi)(x)$ telle que $w(x)<0$.
%\end{enumerate}
\subsubsection{Lower bound for $h(d^{(h'p^{e})}G_{N}(\Phi(l)(x)))$}
\begin{prop}
Let $l\in S(A)$ satisfying the RV condition and $h'\in \mathbb{N}$ with $0\leq h'\leq t-1$. If $d^{(h'p^{e})}G_{N}(\Phi(l)(x))\neq 0$, then we have:\[h(d^{(h'p^{e})}G_{N}(\Phi(l)(x)))\geq \deg_{T}(l)\frac{(t-h')}{c(\Phi)}.\]
\end{prop}
\begin{proof}
We call $\zeta:=N_{k(\Phi)(x)/k}(d^{(h'p^{e})}G_{N}(\Phi(l)(x)))$. Since $\zeta$ is the product of $[k(\Phi)(x):k]$ conjugates (possibly equal) of $d^{(h'p^{e})}G_{N}(\Phi(l)(x))$, we have% Then, by calling $\mathcal{I}(k(\Phi)(x)/k)$ the set of all the $k-$isomorphisms of $k(\Phi)(x)$ in $\ov{k}$
:\[h(\zeta)%=h(\prod_{\sigma\in\mathcal{I}(k(\Phi)(x)/k)}\sigma(d^{(h'p^{e})}G_{N}(\Phi(l)(x))))\leq [k(\Phi)(x):k]_{sep.}h(d^{(h'p^{e})}G_{N}(\Phi(l)(x)))\leq\]\[
\leq [k(\Phi)(x):k]h(d^{(h'p^{e})}G_{N}(\Phi(l)(x))).\]%as the Weil height is left unchanged by the action of the $k-$isomorphisms of $k(\Phi)(x)$ and satisfies the property (\ref{eq:2}). 
Let $w$ be a place of $k(\Phi)(x)$ such that $w|v_{l}$, and let $v$ be its restriction to $k(\Phi)$. Denote by $\mathcal{O}_{v}\subset k(\Phi)$ the valuation ring of $v$. As $w(x)\geq 0$ for each $w|v_{l}$ (see Proposition 6), the minimal (monic) polynomial $\Delta(X)$ of $x$ over $k(\Phi)$ has coefficients in $\mathcal{O}_{v}$. We know that:\[d^{(h'p^{e})}G_{N}(\Phi(l)(x))=\Delta(\Phi(l)(x))^{t-h'}R_{h'}(\Phi(l)(x))\]where $G_{N}(X)$ and $R_{h'}(X)$ are in $\mathcal{O}_{v}[X]\setminus\{0\}$%\footnote{C'est parce-que $x$ est séparable sur $k(\Phi)$ que sa multiplicité dans son polyn\^{o}me minimal est $t-h'$. Dans le cas pas séparable, il faut utiliser un exposant plus petit: $\frac{t-h'}{D_{p.i.}}$, avec $D_{p.i.}$ degré d'inséparabilité de $x$ sur $k(\Phi)$} 
(we remark that as $\Delta(X), \Phi(l)(X)\in \mathcal{O}_{v}[X]$, we have $R_{h'}(X)\in \mathcal{O}_{v}[X]$ too). Let us call $\mathfrak{l}$ the prime ideal of $\mathcal{O}_{v}$ dividing $l$ and corresponding to $v$. Using the RV hypothesis on $l$ we have:\[\Delta(\Phi(l)(x))\equiv\Delta(x^{q^{d\deg_{T}(l)}})\texttt{ mod }(\mathfrak{l}\mathcal{O}_{w}),\]where $\mathcal{O}_{w}$ is the valuation ring of $k(\Phi)(x)$ with respect to $w$. Let $f_{v}=[\mathcal{O}_{v}/v:A/l]$ be the inertia degree of $v$ over $v_{l}$. By the RV condition, we have $f_{v}=1$, hence $|\mathcal{O}_{v}/v|=|A/l|=q^{\deg_{T}(l)}$. It follows that the coefficients of $\Delta(X)$ are congruent to their powers to $q^{d\deg_{T}(l)}$ mod ($\mathfrak{l}$). For this reason, we have that:\[\Delta(x^{q^{d\deg_{T}(l)}})\equiv \Delta(x)^{q^{d\deg_{T}(l)}}\equiv 0\texttt{ mod }(\mathfrak{l}\mathcal{O}_{w}).\]If we call $e_{w|v}$ the ramification index of $w$ over $v$, we have therefore that $w(\Delta(\Phi(l)(x)))\geq \frac{e_{w|v}}{e_{w}}=\frac{1}{e_{v}}$, where $e_{v}$ is the ramification index of $v$ over $v_{l}$. %It follows that $w(\Delta(\Phi(l)(x))\geq \frac{1}{e_{v}}$. 
So we can conclude that: \begin{equation} w(d^{(h'p^{e})}G_{N}(\Phi(l)(x)))\geq \frac{t-h'}{e_{v}} \label{eq:m} \end{equation} for each $w|v_{l}$. Now, assuming $d^{(h'p^{e})}G(\Phi(l)(x))\neq 0$, we have that $\zeta\neq 0$ and therefore $\zeta^{-1}$ exists. %As $v_{l}(\zeta)=\sum_{w|v_{l}}w(d^{(h'p^{e})}G_{N}(\Phi(l)(x)))$
Therefore:\[h(\zeta)=h(\zeta^{-1})\geq\max\{0,-\deg_{T}(l)v_{l}(\zeta^{-1})\}=\max\{0,\deg_{T}(l)v_{l}(\zeta)\}.\]By (\ref{eq:m}) we have that $v_{l}(\zeta)=\sum_{w|v_{l}}n_{w}w(d^{(h'p^{e})}G_{N}(\Phi(l)(x)))\geq \sum_{w|v_{l}}n_{w}\frac{t-h'}{e_{v}}\geq \sum_{w|v_{l}}n_{w}\frac{t-h'}{c(\Phi)}=[k(\Phi)(x):k]\frac{(t-h')}{c(\Phi)}$. %Now, for every conjugate $\sigma(d^{(h'p^{e})}G_{N}(\Phi(l)(x)))$ of $d^{(h'p^{e})}G_{N}(\Phi(l)(x))$ over $k$ and every $w|v_{l}$, there exists $w'|v_{l}$ such that $w(d^{(h'p^{e})}G_{N}(\Phi(l)(x)))=w'(\sigma(d^{(h'p^{e})}G_{N}(\Phi(l)(x)))$. By (\ref{eq:m}) it follows that
Therefore:\[h(\zeta)\geq [k(\Phi)(x):k]\deg_{T}(l)\frac{(t-h')}{c(\Phi)}.\]As:\[h(\zeta)\leq [k(\Phi)(x):k]h(d^{(h'p^{e})}G_{N}(\Phi(l)(x)))\]the statement follows.
\end{proof}
\subsubsection{Final contradiction}
In this subsection, we will finally complete the proof of Proposition 5. Let $l\in S(A)$ be as in this proposition. Arguing by contradiction, let us assume that there exists $0\leq h'\leq h-1$ such that $d^{(h'p^{e})}G_{N}(\Phi(l)(x))\neq 0$. By the inequalities showed in Proposition 7 and Proposition 8, we have that:\[\deg_{T}(l)\frac{(t-h')}{c(\Phi)}\leq h(p_{ij})+L[h(\Phi(l)(x))+h(\Phi(Nl)(x))]+\deg_{T}(N)h(\Phi)h'p^{e}.\]And, as $h'\leq h-1$, we easily conclude that: \begin{equation} \deg_{T}(l)\frac{(t-h)}{c(\Phi)}\leq h(p_{ij})+L[h(\Phi(l)(x))+h(\Phi(Nl)(x))]+\deg_{T}(N)h(\Phi)hp^{e}. \label{eq:11} \end{equation} We now show that for $c_{0}$ as in Theorem 2 the choice of the parameters $L$, $t$ and $h$ contradicts the inequality (\ref{eq:11}). By Proposition 4 and inequality (\ref{eq:10}), we have:\[h(p_{ij})\leq \frac{Dc(\Phi)}{L^{2}-tDc(\Phi)}\sum_{0\leq h\leq t-1}(L[h(x)+h(\Phi(N)(x))]+\deg_{T}(N)h(\Phi)hp^{e}).\]Hence, by Lemma 5 and Proposition 2, we obtain:\[h(p_{ij})\leq 2\frac{Dc(\Phi)}{L^{2}}\sum_{0\leq h\leq t-1}(L[\widehat{h}(x)+2\gamma(\Phi)+\widehat{h}(\Phi(N)(x))]+\deg_{T}(N)h(\Phi)hp^{e})\]\[\leq 2\frac{Dtc(\Phi)}{L^{2}}(2Lq^{d\deg_{T}(N)}\widehat{h}(x)+4(d+1)Lh(\Phi)+\deg_{T}(N)h(\Phi)p^{e}t/2).\]Condition (\ref{eq:8}) now provides\footnote{As $\log{L}\geq d$ (which is a consequence of the inequality $D\geq q^{q+d+1}$) one has that $\deg_{T}(N)\leq \frac{2}{d}\log{L}$.} the inequality:\[h(p_{i,j})\leq 2\frac{Dtc(\Phi)}{L^{2}}(2q^{d}L^{2}\widehat{h}(x)+4(d+1)Lh(\Phi)+\frac{1}{d}\log{L}h(\Phi)tp^{e})\] \begin{equation} =4q^{d}c(\Phi)Dt\widehat{h}(x)+\frac{8(d+1)h(\Phi)c(\Phi)}{L}Dt+\frac{2h(\Phi)c(\Phi)}{d}\frac{Dt^{2}\log{L}}{L^{2}}p^{e}. \label{eq:19} \end{equation} \\\\
Now, by (\ref{eq:19}) and (\ref{eq:11}):\[\deg_{T}(l)\frac{(t-h)}{c(\Phi)}< 4q^{d}c(\Phi)Dt\widehat{h}(x)+\frac{8(d+1)h(\Phi)c(\Phi)}{L}Dt+\frac{2h(\Phi)c(\Phi)}{d}\frac{Dt^{2}\log{L}}{L^{2}}p^{e}\]\[+L[2q^{d(\deg_{T}(N)+\deg_{T}(l))}\widehat{h}(x)+4(d+1)h(\Phi)]+\deg_{T}(N)h(\Phi)hp^{e}\]\[\leq 4q^{d}c(\Phi)Dt\widehat{h}(x)+\frac{8(d+1)h(\Phi)c(\Phi)}{L}Dt+\frac{2h(\Phi)c(\Phi)}{d}\frac{Dt^{2}\log{L}}{L^{2}}p^{e}\]\[+2q^{d\deg_{T}(l)}q^{d}L^{2}\widehat{h}(x)+4(d+1)h(\Phi)L+\frac{2}{d}\log{L}h(\Phi)hp^{e}.\]The choices (\ref{eq:14}), (\ref{eq:15}) and (\ref{eq:16}) imply that $h\leq t/2$. So:\[\deg_{T}(l)\frac{t}{c(\Phi)}\leq  8q^{d}c(\Phi)Dt\widehat{h}(x)+\frac{16(d+1)h(\Phi)c(\Phi)}{L}Dt+\frac{4h(\Phi)c(\Phi)}{d}\frac{Dt^{2}\log{L}}{L^{2}}p^{e}\]\[+4q^{d\deg_{T}(l)}q^{d}L^{2}\widehat{h}(x)+8(d+1)h(\Phi)L+\frac{4}{d}\log{L}h(\Phi)hp^{e}.\]Knowing that (see Lemma 5) $tDc(\Phi)<L^{2}$, we obtain that:\begin{equation} \deg_{T}(l)t<c_{4}(L^{2}q^{d\deg_{T}(l)}\widehat{h}(x)+h(\Phi)L+\frac{h(\Phi)c(\Phi)}{d}\frac{Dt^{2}}{L^{2}}p^{e}\log{L}+\frac{h(\Phi)}{d}hp^{e}\log{L}); \label{eq:20} \end{equation} where we put:\[c_{4}:=24q^{d}c(\Phi).\] \\

We now find a lower bound for $\deg_{T}(l)t$.\\\\
For each $a,b\in \mathbb{R}^{+}$ such that $a,b\geq 4$, we have that $[a][b]\geq \frac{1}{2}ab$. Therefore, we pose:\[\alpha:=h(\Phi)c(\Phi)\]and we remark that:\[c_{0}\geq q^{d}.\]Such a condition implies, as $D\geq q^{q+d+1}$, that $t,\deg_{T}(l)\geq 4$. Therefore:\[\deg_{T}(l)t\geq\frac{1}{2}\frac{\alpha}{r}(4\log{c_{0}}+3\log\log{D}+2\log{p^{e}}-\log\log\log{D})c_{0}^{3}\frac{D\log{D}}{(\log\log{D})^{3}}p^{e}\]\[\geq \frac{1}{2}\frac{\alpha}{r}(4\log{c_{0}}+2\log\log{D})c_{0}^{3}\frac{D\log{D}}{(\log\log{D})^{3}}p^{e}\] \begin{equation} \geq \frac{\alpha}{r}c_{0}^{3}\frac{D\log{D}}{(\log\log{D})^{2}}p^{e}. \label{eq:23} \end{equation}\\\\
We now prove that (\ref{eq:20}) cannot hold if we assume Hypothesis 1. This will follow from the following facts: \begin{equation} \deg_{T}(l)t\geq 4c_{4}h(\Phi)L; \label{eq:24} \end{equation} \begin{equation} \deg_{T}(l)t\geq 4c_{4}\frac{h(\Phi)c(\Phi)}{d}\frac{Dt^{2}}{L^{2}}p^{e}\log{L}; \label{eq:25} \end{equation} \begin{equation} \deg_{T}(l)t\geq 4c_{4}\frac{h(\Phi)}{d}hp^{e}\log{L}; \label{eq:26} \end{equation}\begin{equation}\deg_{T}(l)t\geq 4c_{4}L^{2}q^{d\deg_{T}(l)}\widehat{h}(x).\label{eq:101}\end{equation}Let us check (\ref{eq:24}) first. We have%We will first find suitable values of $c_{0}$ to show (\ref{eq:24}), (\ref{eq:25}), (\ref{eq:26}). 
: \begin{equation} c_{0}^{2}\frac{D\log{D}}{(\log\log{D})^{2}}p^{e}\leq L\leq 2c_{0}^{2}\frac{D\log{D}}{(\log\log{D})^{2}}p^{e} \label{eq:27} \end{equation} by (\ref{eq:14}) and the hypothesis that $D\geq q^{q+d+1}$. The inequality (\ref{eq:24}) is thus by (\ref{eq:23}) a consequence of the following one:\[\frac{\alpha}{r}c_{0}^{3}\frac{D\log{D}}{(\log\log{D})^{2}}p^{e}\geq 8c_{4}\alpha c_{0}^{2}\frac{D\log{D}}{(\log\log{D})^{2}}p^{e}\]which is true since:\[c_{0}\geq 8rc_{4}=192rq^{d}c(\Phi).\]The inequality (\ref{eq:25}) is on the other hand by (\ref{eq:23}) and (\ref{eq:27}) a consequence of the following one:\[\frac{\alpha}{r}c_{0}^{3}\frac{D\log{D}}{(\log\log{D})^{2}}p^{e}\geq 4c_{4}\frac{\alpha}{d}c_{0}^{6}\frac{D^{3}(\log{D})^{2}p^{2e}}{(\log\log{D})^{6}}\frac{(\log\log{D})^{4}}{c_{0}^{4}p^{2e}D^{2}(\log{D})^{2}}p^{e}\log{L}\]which follows from this condition:\[c_{0}\log{D}\geq \frac{4rc_{4}}{d}(2\log{c_{0}}+\log{D}+\log\log{D}-2\log\log\log{D}+\log{2}+\log{p^{e}})\]which is implied by the following inequality: \begin{equation} c_{0}\log{D}\geq\frac{4rc_{4}}{d}(2\log{c_{0}}+4\log{D}). \label{eq:29} \end{equation} Now, (\ref{eq:29}) follows from these two facts:\[c_{0}\geq \frac{32rc_{4}}{d}=\frac{768rq^{d}c(\Phi)}{d},\]and:\[c_{0}\log{D}\geq \frac{16rc_{4}}{d}\log{c_{0}}.\]These are a consequence of:\begin{equation}\frac{c_{0}}{\log{c_{0}}}\geq\frac{16rc_{4}}{d}=\frac{384rq^{d}c(\Phi)}{d}.\label{eq:100}\end{equation}To prove (\ref{eq:100}) we consider two cases. Suppose first that $r\leq \frac{1}{384}$. As the function $\frac{X}{\log{X}}$ increases for $X\geq e$, hence in particular for $X\geq 2q^{d}c(\Phi)^{2}$, we use the fact that:\[c_{0}\geq 2q^{d}c(\Phi)^{2};\]so that (\ref{eq:100}) is implied by the following one:\[\frac{2q^{d}c(\Phi)^{2}}{d+2\log{c(\Phi)}+\log{2}}\geq \frac{384rq^{d}c(\Phi)}{d}.\]This inequality is satisfied because $r\leq \frac{1}{384}$ and $\frac{2c(\Phi)}{d+2\log{c(\Phi)}+\log{2}}\geq \frac{1}{d}$. %Hence, by (\ref{eq:31}) we have that:\[\frac{c_{0}}{\log{c_{0}}}\geq \frac{2q^{d}}{d+\log{2}}\geq \frac{384rq^{d}}{d}.\]%For:\[r\leq \frac{1}{384},\]this last value is clearly $\geq\frac{384rq^{d}}{d}$. 
Let us now examine the case where:\[r>\frac{1}{384}.\]Since $c_{0}\geq 35000dq^{d}c(\Phi)^{2}$ and $r\leq 1$, %we have:\[c_{0}\geq 2(384rq^{d})=768rq^{d}.\]For this reason we can say that 
there exists a real number $X_{0}\geq 91d$ such that:\[c_{0}=X_{0}384rq^{d}c(\Phi)^{2}.\]We now claim that:\[\frac{X_{0}c(\Phi)}{\log{X_{0}}+\log(384r)+d+2\log{c(\Phi)}}\geq \frac{1}{d},\]for all $d\geq 1$. Indeed, as $q\geq 2$ and $\log(384r)\leq \log_{2}{384}< 9$, this inequality follows by:\[\frac{X_{0}c(\Phi)}{\log{X_{0}}+9+d+2\log{c(\Phi)}}\geq \frac{1}{d},\]and such a fact is true for all $X_{0}\geq 91$, so in particular for $X_{0}\geq 91d$ as well, for $d\geq 1$. Thus:\[\frac{c_{0}}{\log{c_{0}}}\geq \frac{X_{0}384rq^{d}c(\Phi)^{2}}{\log{X_{0}}+9+d+2\log{c(\Phi)}}\geq \frac{384rq^{d}c(\Phi)}{d},\]which gives (\ref{eq:100}). This completes the proof of the inequality (\ref{eq:25}). %As, for $r>1/384$:\[c_{0}>X_{0},\]this implies:\[\frac{c_{0}}{\log{c_{0}}}> \frac{X_{0}}{\log{384}+\log{r}+\log{X_{0}}+d}\geq \frac{1}{d}\]and we can now see that (\ref{eq:100}) follows again. This proves (\ref{eq:25}). 

By (\ref{eq:23}) and (\ref{eq:27}) we have that condition (\ref{eq:26}) is a consequence of the following:\[\frac{\alpha}{r}c_{0}^{3}\frac{D\log{D}}{(\log\log{D})^{2}}p^{e}\geq 4c_{4}\frac{\alpha}{d}c_{0}\frac{D}{(\log\log{D})^{2}}p^{e}(2\log{c_{0}}+4\log{D})\]and, therefore, of the following one: \begin{equation} c_{0}^{2}\log{D}\geq \frac{8rc_{4}}{d}(2\log{c_{0}}+4\log{D}). \label{eq:38} \end{equation} As (\ref{eq:38}) is implied by (\ref{eq:29}), we thus also have (\ref{eq:26}).\\
Lastly, (\ref{eq:101}) follows precisely by Hypothesis 1, which yields the contradiction we need to prove Proposition 5. Indeed, by Hypothesis 1 we have:\[\widehat{h}(x)<C_{0}\frac{(\log\log{D})^{2+\frac{d\alpha}{r}}}{D(p^{e})^{1+\frac{2d}{r}\alpha}(\log{D})^{1+\frac{3d}{r}\alpha}}\leq \frac{\alpha(\log\log{D})^{2+\frac{d}{r}\alpha}}{r384q^{d}c(\Phi)c_{0}^{\frac{4d}{r}\alpha+1}D(p^{e})^{1+\frac{2d}{r}\alpha}(\log{D})^{1+\frac{3d}{r}\alpha}}.\]By (\ref{eq:27}) and (\ref{eq:17}) one can now easily check that:\[4c_{4}L^{2}q^{d\deg_{T}(l)}\widehat{h}(x)<c_{0}^{3}\frac{D\log{D}}{(\log\log{D})^{2}}p^{e}\frac{\alpha}{r},\]which by (\ref{eq:23}) yields (\ref{eq:101}). We have therefore proved that inequality (\ref{eq:20}) cannot hold. This contradiction completes the proof of Proposition 5.%\\\\
%We now remark that:\[\frac{\deg_{T}(l)t}{4c_{4}L^{2}q^{d\deg_{T}(l)}}=\frac{\deg_{T}(l)t}{96q^{d(\deg_{T}(l)+1)}L^{2}}\geq \frac{\frac{\alpha}{r}c_{0}^{3}\frac{D\log{D}}{(\log\log{D})^{2}}p^{e}}{96q^{d}\left(c_{0}^{4}\frac{(\log{D})^{3}p^{2e}}{\log\log{D}}\right)^{\frac{\alpha d}{r}}4c_{0}^{4}\frac{D^{2}(\log{D})^{2}}{(\log\log{D})^{4}}p^{2e}}\geq\]\[\geq \frac{\frac{\alpha}{r}(\log\log{D})^{2+\frac{\alpha d}{r}}}{384q^{d}c_{0}^{\frac{4\alpha d}{r}+1}(\log{D})^{\frac{3\alpha d}{r}+1}p^{(\frac{2\alpha d}{r}+1)e}D}.\]Therefore, by Hypothesis 1: \begin{equation} \widehat{h}(x)<\frac{\deg_{T}(l)t}{4c_{4}L^{2}q^{d\deg_{T}(l)}}. \label{eq:39} \end{equation} This fact plus (\ref{eq:24}), (\ref{eq:25}), (\ref{eq:26}) easily completes the proof that (\ref{eq:20}) and (\ref{eq:23}) can not hold together, showing therefore that Hypothesis 2 is false.
$\square$
\subsection{Step 3 - Counting zeroes of $G_{N}(X)$}
We have shown in subsection 3.2 that assuming Hypothesis 1 yields $G_{N}(\Phi(l)(x))=0$ for each $l$ which satisfies the RV property and $\deg_{T}(l)$ chosen as in (\ref{eq:17}). Now, by Galois Theory we know that for each of such $l$ all the conjugates of $\Phi(l)(x)$ over $k(\Phi)$ are also zeroes of $G_{N}(X)$ with the same multiplicity of $\Phi(l)(x)$. Using Lemma 4 we can now compute the number of zeroes of $G_{N}(X)$ with their multiplicity. As we assume the Drinfeld module to be RV($r$) the polynomial $G_{N}(X)$ turns out to have at least\[(\frac{q^{r\deg_{T}(l)}}{2r\deg_{T}(l)}-\frac{\log{D'_{sep.}}}{\log{2}})D'_{sep.}\]zeroes, with multiplicity at least $hp^{e}$, where $\deg_{T}(l)$ is defined as in (\ref{eq:17}). As $\log{D'_{sep.}}\leq \log{D_{sep.}}$ and $D=[k(x):k]\leq [k(\Phi)(x):k(\Phi)][k(\Phi):k]=D'c(\Phi)$, it follows that the sum of multiplicities of the roots of $G_{N}(X)$ is at least\[(\frac{q^{r\deg_{T}(l)}}{2r\deg_{T}(l)}-\frac{\log{D_{sep.}}}{\log{2}})\frac{D}{c(\Phi)}h.\]Knowing that:\[\deg_{X}(G_{N}(X))\leq 2(L-1)q^{d\deg_{T}(N)}< 2q^{d}L^{2}\]we will now prove that Hypothesis 1 is false by showing that: \begin{equation} (\frac{q^{r\deg_{T}(l)}}{2r\deg_{T}(l)}-\frac{\log{D_{sep.}}}{\log{2}})\frac{D}{c(\Phi)}h\geq 2q^{d}L^{2}> \deg_{X}(G_{N}(X)). \label{eq:40} \end{equation} Indeed, (\ref{eq:40}) would prove that the sum of multiplicities of the roots of $G_{N}(X)$ exceeds the degree, so $G_{N}(X)$ has to be identically $0$, which would contradict Proposition 4.
\begin{prop}
As $c_{0}\geq 2q$, we have that: \begin{equation} \frac{q^{r\deg_{T}(l)}}{2r\deg_{T}(l)}\geq 2\frac{\log{D_{sep.}}}{\log{2}}. \label{eq:41} \end{equation} 
\end{prop}
\begin{proof}
As (\ref{eq:17}) implies that:\[\alpha\log\left(c_{0}^{4}\frac{(\log{D})^{3}p^{2e}}{\log\log{D}}\right)\geq r\deg_{T}(l)\geq r\alpha\left(\frac{1}{r}\log\left(c_{0}^{4}\frac{(\log{D})^{3}p^{2e}}{\log\log{D}}\right)-1\right)\]it follows that:\[q^{r\deg_{T}(l)}\geq \frac{\left(c_{0}^{4}\frac{(\log{D})^{3}p^{2e}}{\log\log{D}}\right)^{\alpha}}{q^{r\alpha}}.\]Therefore: \begin{equation} \frac{q^{r\deg_{T}(l)}}{2r\deg_{T}(l)}\geq \frac{\left(c_{0}^{4}\frac{(\log{D})^{3}p^{2e}}{\log\log{D}}\right)^{\alpha}}{q^{r\alpha}2\alpha(4\log{c_{0}}+3\log\log{D}+2\log{p^{e}}-\log\log\log{D})}. \label{eq:42} \end{equation} Condition (\ref{eq:41}) will thus be a consequence of the following one:\[\frac{\left(c_{0}^{4}\frac{(\log{D})^{3}p^{2e}}{\log\log{D}}\right)^{\alpha}}{q^{r\alpha}2\alpha(4\log{c_{0}}+3\log\log{D}+2\log{p^{e}})}\geq 2\frac{\log{D_{sep.}}}{\log{2}}.\]We thus have to show that:\[c_{0}^{4\alpha}(\log{D})^{3\alpha-1}p^{2\alpha e}\geq \frac{q^{r\alpha}4\alpha}{\log{2}}(\log\log{D})^{\alpha}(4\log{c_{0}}+3\log\log{D}+2\log{p^{e}}).\]Such an inequality is a consequence of the following three conditions: \begin{equation} c_{0}^{4\alpha}(\log{D})^{3\alpha-1}p^{2\alpha e}\geq \frac{48q^{r\alpha}\alpha}{\log{2}}(\log\log{D})^{\alpha}\log{c_{0}}, \label{eq:43} \end{equation} \begin{equation} c_{0}^{4\alpha}(\log{D})^{3\alpha-1}p^{2\alpha e} \geq \frac{36q^{r\alpha}\alpha}{\log{2}}(\log\log{D})^{\alpha+1}; \label{eq:44} \end{equation} \begin{equation} c_{0}^{4\alpha}(\log{D})^{3\alpha-1}p^{2\alpha e} \geq \frac{24q^{r\alpha}\alpha}{\log{2}}(\log\log{D})^{\alpha}\log{p^{e}}. \label{eq:45} \end{equation} As we are assuming that $D\geq q^{q+d+1}$, (\ref{eq:43}) is satisfied if: \begin{equation} \frac{c_{0}^{4\alpha}}{\log{c_{0}}}\geq \frac{48q^{r\alpha}\alpha}{\log{2}}. \label{eq:46} \end{equation} Now, taking $c_{0}$ as in Theorem 2 and, therefore, such that:\[c_{0}\geq 2q,\]we see that (\ref{eq:46}) is satisfied. Indeed, we have that:\[\frac{2^{4\alpha}q^{3}}{\log{2}+1}\geq \frac{48\alpha}{\log{2}}\]and:\[q^{4\alpha-3}\geq q^{r\alpha}\]which is always true for each $\alpha \geq 1$ and $r\leq 1$.\\\\
Now, (\ref{eq:44}) follows from this inequality:\[c_{0}^{4\alpha}p^{\alpha e}\geq \frac{36q^{r\alpha}\alpha}{\log{2}}\]which is satisfied by (\ref{eq:46}). (\ref{eq:45}) directly follows from (\ref{eq:46}). %We have so to show that:\[c_{0}^{4\alpha}(\log{D})^{2\alpha-1}\geq \frac{4q^{r\alpha}\alpha}{\log{2}}(4\log{c_{0}}+2\log\log{D})(\log\log{D})^{\alpha}.\]%La majoration:\[4\log{c_{0}}+2\log\log{D}\leq 6\sqrt{c_{0}}\log\log{D}\](conséquence de (\ref{eq:30})) nous réconduit à montrer que:\[c_{0}^{4\alpha}(\log{D})^{2\alpha-1}\geq \frac{24q^{r\alpha}\alpha}{\log{2}}\sqrt{c_{0}}(\log\log{D})^{\alpha+1}.\]
We thus proved (\ref{eq:41}).
\end{proof}
%***\\\\

%donc: \begin{equation} c_{0}\geq \left(\frac{24q^{r\alpha}\alpha}{\log{2}}\right)^{\frac{2}{8\alpha-1}}. \label{eq:40} \end{equation} 

\textbf{Proof of Theorem 2}\\\\
Recall that:\[c_{0}=35000 d\alpha^{3}q^{d+r\alpha}.\]If we call:\[A:=35000\]we see that the following three inequalities hold:\[\frac{A}{\log{A}}\geq 2304\]\[\frac{Ad}{d+\log{d}}\geq 2304\]\[\frac{A\alpha}{3\log{\alpha}+\alpha}\geq 2304.\]As:\[c_{0}=Ad\alpha^{3}q^{d+r\alpha},\]we have by the three inequalities above that:%\[\frac{c_{0}}{\log{c_{0}}}\geq 768q^{d+r\alpha}\alpha^{2}.\]Indeed:
\[\frac{c_{0}}{\log{c_{0}}}\geq \left(\frac{\log{A}+\log{d}+d+3\log{\alpha}+\alpha}{Ad\alpha^{3}q^{d+r\alpha}}\right)^{-1}\]\[\geq \left(\frac{1}{2304d\alpha^{3}q^{d+r\alpha}}+\frac{1}{2304\alpha^{3}q^{d+r\alpha}}+\frac{1}{2304d\alpha^{2}q^{d+r\alpha}}\right)^{-1}\geq \frac{2304\alpha^{2}q^{d+r\alpha}}{3}=768\alpha^{2}q^{d+r\alpha}.\]We now see, by calling:\[X:=\log{D}\]and remembering that $D\geq q^{q+d+1}$ that:\[\frac{768q^{d+r\alpha}\alpha^{2}(\log{X})^{\alpha-2}\log{c_{0}}}{c_{0}^{4\alpha-3}X^{3\alpha-2}}\leq 1,\]\[\frac{384q^{d+r\alpha}\alpha^{2}(\log{X})^{\alpha-2}}{c_{0}^{4\alpha-3}X^{3(\alpha-1)}}\leq 1,\]\[\frac{576q^{d+r\alpha}\alpha^{2}(\log{X})^{\alpha-1}}{c_{0}^{4\alpha-3}X^{3\alpha-2}}\leq 1.\]%By taking:\[X:=\log{D}\]and 
Hence we have:%\[\frac{c_{0}^{4\alpha-3}X^{3\alpha-2}p^{2(\alpha-1)e}}{64q^{d+r\alpha}\alpha^{2}(4\log{c_{0}}+3\log{X}+2\log{p^{e}})(\log{X})^{\alpha-2}}\geq 1.\]Such an inequality is implied by the following one:
\[\frac{64q^{d+r\alpha}\alpha^{2}((4\log{c_{0}}+2\log{p^{e}})(\log{X})^{\alpha-2}+3(\log{X})^{\alpha-1})}{c_{0}^{4\alpha-3}X^{3\alpha-2}p^{2(\alpha-1)e}}\leq 1.\]Therefore, since $c(\Phi)\leq \alpha$:\[\frac{c_{0}^{4\alpha-3}(\log{D})^{3\alpha-2}p^{2(\alpha-1)e}}{64q^{r\alpha+d}\alpha(4\log{c_{0}}+3\log\log{D}+2\log{p^{e}})c(\Phi)(\log\log{D})^{\alpha-2}}\geq 1.\]By (\ref{eq:42}) and by the fact that:\[h\geq \frac{1}{2}c_{0}\frac{D}{(\log\log{D})^{2}},\]and:\[L\leq 2c_{0}^{2}\frac{D\log{D}}{(\log\log{D})^{2}}p^{e},\](which are a consequence of the hypotheses $D\geq q^{q+d+1}$ and $c_{0}\geq 1$) we have that:\[\frac{q^{r\deg_{T}(l)}}{4r\deg_{T}(l)}\frac{D}{c(\Phi)}h\]\[\geq \frac{\left(c_{0}^{4}\frac{(\log{D})^{3}p^{2e}}{\log\log{D}}\right)^{\alpha}}{q^{r\alpha}4\alpha(4\log{c_{0}}+3\log\log{D}+2\log{p^{e}}-\log\log\log{D})}\frac{D}{c(\Phi)}\frac{1}{2}c_{0}\frac{D}{(\log\log{D})^{2}}\]\[\geq 2q^{d}4c_{0}^{4}\frac{D^{2}(\log{D})^{2}}{(\log\log{D})^{4}}p^{2e}\geq 2q^{d}L^{2}.\]%Therefore:%On the other hand, $\deg_{X}(G_{N}(X))\leq 2(L-1)q^{d\deg_{T}(N)}<2q^{d}L^{2}$, while we have just shown that:
%\[\frac{q^{r\deg_{T}(l)}}{4r\deg_{T}(l)}\frac{D}{c(\Phi)}h\geq 2q^{d}L^{2}.\]
By Proposition 9 it is now easy to see that (\ref{eq:40}) immediately follows. As we have seen, this is a contradiction. This means that Hypothesis 1 is false, hence the first part of Theorem 2 is proved under the RV($r$) hypothesis.\\\\

We now prove the second statement of Theorem 2, involving the RV($r$)$^{*}$ hypothesis.\\\\
We thus assume that the condition RV($r$)$^{*}$ is satisfied by the Drinfeld module $\mathbb{D}=(\mathbb{G}_{a}, \Phi)$. %This is a weaker condition than RV($r$): along the previous passages, where we were under the condition RV($r$), we have whowed that the hypotheses 1 and 2 were false under the precise following choice of $C$:\[C=C_{0}:=\min\{q^{-5d(2(d+1)h(\Phi)+1)(q^{q+d+1}-1)^{2}c(\Phi)^{2}},\frac{h(\Phi)c(\Phi)}{768rq^{d}c_{0}^{1+\frac{4d}{r}h(\Phi)c(\Phi)}}\}.\]In fact, such a choice implied, by the RV($r$) condition, the existence of at least $q^{r\deg_{T}(l)}/2r\deg_{T}(l)$ elements $l\in P_{\deg_{T}(l)}$, where $\deg_{T}(l)$ is fixed as in (\ref{eq:17}).\\\\
In our new situation, the condition RV($r$)$^{*}$ is not anymore sufficient to ensure the existence of all elements of $P_{\deg_{T}(l)}(A)$ with the desired degree in $T$. We will have a sufficiently large number of elements of $P_{\deg_{T}(l)}(A)$ only assuming a value of $\deg_{T}(l)$ large enough. We therefore modify the previous proof as follows. Let $N(\Phi)$ be an integer satisfying the conditions of Definition 3. We choose a positive integer $D_{\Phi}$ with $D_{\Phi}>q^{q+d+1}$ such that, for all $D\geq D_{\Phi}$, we have:\[h(\Phi)c(\Phi)\left[\frac{1}{r}\log\left(c_{0}^{4}\frac{(\log{D})^{3}}{\log\log{D}}\right)\right]\geq N(\Phi).\]We have now two cases. If $D\geq D_{\Phi}$, then we repeat exactly the same proof as before (with the same choice for the parameters $L$, $t$, $h$, $\deg_{T}(l)$) and we obtain the bound of Theorem 2 with $C=C_{0}$. If now $D<D_{\Phi}$, then we obtain, by Lemma 2, the lower bound of the theorem with:\[C=\min\{q^{-5d(2(d+1)h(\Phi)+1)((D_{\Phi}-1)c(\Phi))^{2}},\frac{h(\Phi)}{384rq^{d}c_{0}^{\frac{4h(\Phi)c(\Phi) d}{r}+1}}\}\leq C_{0}.\]In both cases, we thus get the estimate of Theorem 2. 

\subsection{Separable case}
A little improvement of the value of $c_{0}$ may be obtained if we restrict to the hypothesis that $x$ is \textbf{separable}. More precisely we have the following statement. 
\begin{thm}
Let $\mathbb{D}=(\mathbb{G}_{a},\Phi)$ be a Drinfeld module defined over $\ov{k}$ satisfying the hypothesis $RV(r)$ or $RV(r)^{*}$. Let:\[c_{0}:=6500dh(\Phi)^{3}c(\Phi)^{3}q^{d+rh(\Phi)c(\Phi)}\]and:\[C_{0}:=\min\{q^{-5d(2(d+1)h(\Phi)+1)((q^{q+d+1}-1)c(\Phi))^{2}},\frac{h(\Phi)}{768rq^{d}c_{0}^{1+\frac{4d}{r}h(\Phi)c(\Phi)}}\}.\]Then, there exists $C>0$ such that for all $x\in\mathbb{D}(\ov{k})_{NT}$ separable with degree $D$ over $k$, one has:\[\widehat{h}_{\mathbb{D}}(x)\geq C\frac{(\log\log_{+}{D})^{2+\frac{d}{r}h(\Phi)c(\Phi)}}{D(\log_{+}{D})^{1+\frac{2d}{r}h(\Phi)c(\Phi)}}\]where:\[C=C_{0}\texttt{   under the hypothesis   }RV(r)\]while\[0<C\leq C_{0}\texttt{   under the hypothesis   }RV(r)^{*}.\]

%Let $\mathbb{D}=(\mathbb{G}_{a},\Phi)$ be a Drinfeld module having rank $d$ and height $h(\Phi)$. We pose:\[c_{0}:=6500dh(\Phi)^{3}c(\Phi)^{3}q^{d+rh(\Phi)c(\Phi)}.\]Let:\[C_{0}:=\min\{q^{-5d(2(d+1)h(\Phi)+1)((q^{q+d+1}-1)c(\Phi))^{2}},\frac{h(\Phi)c(\Phi)}{768rq^{d}c_{0}^{1+\frac{4d}{r}h(\Phi)c(\Phi)}}\}.\]For each $x\in\mathbb{D}(\ov{k})_{NT}$ separable with degree $D$ over $k$ there exists $C>0$ such that:\[\Phi\texttt{ is }RV(r)\Longrightarrow \widehat{h}_{\mathbb{D}}(x)\geq C_{0}\frac{(\log\log_{+}{D})^{2+\frac{d}{r}h(\Phi)c(\Phi)}}{D(\log_{+}{D})^{1+\frac{2d}{r}h(\Phi)c(\Phi)}}.\]\[\Phi\texttt{ is }RV(r)^{*}\Longrightarrow
%\texttt{   }\widehat{h}_{\mathbb{D}}(x)\geq C\frac{(\log\log_{+}{D})^{2+\frac{d}{r}h(\Phi)c(\Phi)}}{D(\log_{+}{D})^{1+\frac{2d}{r}h(\Phi)c(\Phi)}}\]for some $C\in ]0,C_{0}]$.
\end{thm}
\begin{proof}
The proof repeats exactly the same steps as in the inseparable case, just assuming $D_{p.i.}=1$. We send the reader to \cite{D} for the explicit passages.
\end{proof}
This result contains L. Denis' result (see Theorem 1) about Carlitz modules: 
\begin{cor}
Under the same hypotheses of Theorem 3, for $\mathbb{D}$ taken as the Carlitz module (which is RV($1$)) one finds the estimate of L. Denis (Theorem 1).
\end{cor}

\section{Appendix: Drinfeld modules and supersingular reduction primes}
In this section we concretely produce examples of Drinfeld modules satisfying RV($r, c_{1}$)$^{*}$ properties, showing in particular (see Theorem 6) that all CM Drinfeld modules with coefficients in $k$ and having rank $1$ or a prime number different from the field characteristic essentially belong to one of these classes. %By Remark 1 this implies a lower bound estimate of their canonical height as predicted in Theorem 2 for \textsl{all} of them. 
Chantal David already showed remarkably (see \cite{C. David} Theorem 1.2) that "in average" a rank 2 Drinfeld module with coefficients in $k$ satisfies the RV($r,c_{q}$)$^{*}$ condition, with $r=1/d=1/2$ for $c_{q}>0$ a constant depending only on $q$. %ont une probabilité de plus en plus grande de satisfaire la condition RV($1/2$)$^{*}$ quand le $\deg_{T}$ de leurs coefficients augmente. 
%We desire in this section to extend to a any rank case the possibility to show a sufficiently large class of Drinfeld modules respecting the RV($r$)$^{*}$ for some convenient $r$. Along our reasonment we will use in a key point the \textbf{effective Chebotarev Theorem} (see Theorem 6), whom the statement leads us to consider the following class of Drinfeld modules.
%We propose a result (Theorem 7) relating the notion of complex multiplication of a Drinfeld module with the RV($r,c_{1}$)$^{*}$ property.
\begin{de}
Let $r\in]0,1]$, $c_{1}\in \mathbb{R}_{>0}$ and $\eta\in \mathbb{N}\setminus\{0\}$ and let $\mathbb{D}=(\mathbb{G}_{a},\Phi)$ be a Drinfeld module. We say that $\mathbb{D}$ is \textbf{RV$_{\eta}$($r,c_{1}$)$^{*}$} if there exists a positive integer $N(\Phi)$ only depending on the choice of $\mathbb{D}$, such that for each $N\in \mathbb{N}$ such that $N\geq N(\Phi)$ and $N\equiv 1$ mod ($\eta$), we have:\[|\{l\in P_{N}(A), l\texttt{ is }RV\}|\geq c_{1}\frac{q^{rN}}{N}.\]
\end{de}
As it is easy to see, a Drinfeld module RV($r,c_{1}$)$^{*}$ is also RV$_{\eta}$($r,c_{1}$)$^{*}$ for $\eta>1$ and the two classes coincide when $\eta=1$.\\\\% We thus propose a proof of the fact that a CM Drinfeld module with rank $d$ where $d$ is a prime number, always satisfies the RV$_{\eta}$($1,1/2d$)$^{*}$ condition, for a suitable $\eta$ which will be defined later.\\\\
%It will be thus easy to see that one can prove (in case of the field of coefficients of the Drinfeld modules which are considered is $k$) that it is possible to obtain for such Drinfeld modules, a lower bound estimate of their associate canonical height on non torsion points, taking the same shape as in Theorem 2. In fact, it will be sufficient to choose $\deg_{T}(l)\equiv 1$ mod ($\eta$), which leads to a non relevant modification of the constants involved in our proofs, providing thus a lower bound estimate of the canonical height of the same order in $D$ as in Theorem 2. We do not explicitly describe the new form of $C$ here in cause of the relevant quantity of long computations.\\\\
Given a Drinfeld module $\mathbb{D}=(\mathbb{G}_{a},\Phi)$, we call a monic irreducible element $p(T)\in$ $S(A)$ which satisfies the RV condition with respect to $\mathbb{D}$ a \textbf{supersingular reduction prime} of $\Phi$. Note that our definition of supersingular prime is stronger than the one which is commonly used, only requiring supersingular reduction of the chosen Drinfeld module at $p(T)$, while the RV property also claims that all primes over $p(T)$ in the field of coefficients have inertia degree 1 on $p(T)$. For this reason we will focus only on Drinfeld modules defined over $k$, so that in such a setting our special notion of supersingular reduction prime clearly coincides with the common one. Thus, there will be no more need to make a distinction between the two definitions.\\\\
%Jusqu'à présent on a considéré toujours le cas des modules de Drinfeld définis sur un corps extension finie de $k$. En général, ce n'est pas forcément le cas et, plus précisément, on a la notion de \textbf{caractéristique} d'un module de Drinfeld défini sur un corps $\mathcal{F}$. On donne un morphisme:\[i:A\to \mathcal{F}\]qui fait de $\mathcal{F}$ un $A-$module et dont un générateur $p(T)\in S(A)$ du noyau (on suppose ce polyn\^{o}me unitaire à une multiplication par un élément de $\mathbb{F}_{q}^{*}$ près sans rien perdre en généralité) est appelé caractéristique du module de Drinfeld défini sur $\mathcal{F}$. On dira que $\mathcal{F}$ est un $A-$corps s'il existe un tel morphisme $i$. Par abus de notation, $i$ et $\mathcal{F}$ étant fixé, on continuera de noter $\mathbb{D}=
%(\mathbb{G}_{a},\Phi)$ le module de Drinfeld. Jusqu'à présent les modules de Drinfeld qu'on a considérés étaient de caractéristique $0$.\\\\
Let $\mathbb{D}=(\mathbb{G}_{a},\Phi)$ be a Drinfeld module of any \textbf{characteristic}\footnote{See \cite{Goss}, Definition 4.4.1.}, defined over $k$. We set:\[End_{k}(\Phi):=\{P(\tau)\in k\{\tau\}, \Phi(a)P=P\Phi(a),\forall a\in A\}.\]This is an $A-$module with respect to the action of $\Phi$ and a subring of $k\{\tau\}$ as well. One can see that this is a free $A-$module.
\begin{lem}
Let $\mathbb{D}$ be a Drinfeld module of characteristic 0, defined over $k$, of rank $d$. The rank of the $A-$module $End_{k}(\Phi)$ divides $d$.
\end{lem}
\begin{proof}
See \cite{D}, Lemma 1.4.3.
\end{proof}
\begin{de}
A Drinfeld module $\mathbb{D}=(\mathbb{G}_{a},\Phi)$ defined over $k$ is called \textbf{CM} or \textbf{with complex multiplication} if the rank of $End_{k}(\Phi)$ as an $A-$module is $d$.
\end{de}
We remark that every Drinfeld module with rank 1 has complex multiplication.
%\begin{de}
%Let $\mathbb{D}=(\mathbb{G}_{a},\Phi)$ be a Drinfeld module with characteristic 0 and rank $d$, and let $\mathcal{F}$ be its field of coefficients, so that:\[\Phi(T)(\tau)=T+a_{1}\tau+...+a_{d}\tau^{d}\in \mathcal{F}\{\tau\}.\]Let $p(T)\in$ $S(A)$. Let $\mathcal{A}_{p(T)}$ be the ring of $p(T)-$integers of $\mathcal{F}$. Let, respectively, $A_{\infty}$ and ${\mathcal{A}_{p(T)}}_{\infty}$ be the completions of $A$ and $\mathcal{A}_{p(T)}$ with respect to the $1/T-$adic valuation. There exists therefore $P_{p,\infty}\in$ Spec ${\mathcal{A}_{p}}_{\infty}$ extending $p(T)\in$ Spec $A_{\infty}$. We call such a $p(T)$ a prime of \textbf{good reduction} of $\Phi$ if the coefficients of $\Phi$ are all contained in $\mathcal{O}_{P_{p,\infty}}$, the ring of $P_{p,\infty}-$valuation of $\mathcal{F}$, and $a_{d}\in \mathcal{O}_{P_{p,\infty}}^{*}$.
%\end{de}
\subsection{Extending $\Phi$ to $End_{k}(\Phi)$}
Let $\mathbb{D}=(\mathbb{G}_{a},\Phi)$ be a Drinfeld module with rank $d$ and characteristic 0, defined over $k$. %Every $p(T)\in$ $S(A)$, except possibly finitely many, is a \textsl{good reduction} prime of $\Phi$ (see \cite{Goss}, Definition 4.10.1). %The coefficients of $\Phi$ are therefore in $\mathcal{O}_{P_{p,\infty}}$. The good reduction 
Reduction of $\Phi(T)$ modulo $p(T)$ gives for every $p(T)\in$ $S(A)$, except possibly finitely many, a \textbf{reduced} Drinfeld module $\mathbb{D}_{p}:=(\mathbb{G}_{a},\Phi^{v_{p}})$, where, by calling $s:=\deg_{T}(p(T))$:\[\Phi^{v_{p}}:A\to \mathbb{F}_{q^{s}}\{\tau\}\]is the $\mathbb{F}_{q}-$algebra homomorphism defined by the association:\[T\mapsto (\Phi(T))^{v_{p}},\]where $(\Phi(T))^{v_{p}}$ is the reduction modulo $p(T)$ of the twisted polynomial $\Phi(T)$ and has still degree $d$ (see \cite{Goss}, Definition 4.10.1). %, where:\[s=[\mathcal{O}_{P_{p,\infty}}/P_{p,\infty}:\mathbb{F}_{q}]=[\mathcal{O}_{P_{p,\infty}}/P_{p,\infty}:A/p(T)]\deg_{T}(p(T))=f(P_{p,\infty}|p)\deg_{T}(p(T)).\]We will call from now on \textbf{reduction of $\Phi$ modulo $p(T)$, or $v_{p}$} the reduction of such modules modulo $P_{p,\infty}$. We call:\[F:=\tau^{s}.\]Such a $F$ is also called \textbf{Weil number} (see \cite{Goss}, Definition 4.12.14).
%Let us call $F_{p}:=\tau^{s}\in End_{\mathbb{F}_{q^{s}}}(\Phi^{v_{p}})$. %This is an $A-$algebra of rank $\leq d^{2}$ (see \cite{Goss}, Theorem 4.7.8), central over $A$ (in other words, such that $A$ is its center), not necessarily commutative. Thus $A$ still acts over such an $A-$algebra via the algebra homomorphism 
One can moreover see (\cite{Goss}, chapter 4) that there exists an injective ring homomorphism:\[\Phi^{v_{p}}:A\hookrightarrow End_{\mathbb{F}_{q^{s}}}(\Phi^{v_{p}}).\]%We remark that $End_{k}(\Phi)/A$ is an entire finite ring extension and $\tau\in End_{\mathbb{F}_{q}}(\Phi^{v_{p}})$. Even if in general one can not extend the homomorphisms $\Phi^{v_{p}}$ to an entire extension of $A$, this can be done naturally to $End_{k}(\Phi)$. Indeed, the extension is
It can be extended in the following way:\[\widetilde{\Phi^{v_{p}}}:End_{k}(\Phi)\hookrightarrow End_{\mathbb{F}_{q^{s}}}(\Phi^{v_{p}})\]\[P(\tau)\mapsto P(\tau)^{v_{p}}\]where $P(\tau)^{v_{p}}$ is obtained by reducing modulo $p(T)$ the coefficients of $P(\tau)$. %Indeed, we remark that the image of such a map is in $End_{\mathbb{F}_{q^{s}}}(\Phi^{v_{p}})$ (for each $a\in A$, $P(\tau)^{v_{p}}\Phi_{a}^{v_{p}}=(P(\tau)\Phi_{a})^{v_{p}}=(\Phi_{a}P(\tau))^{v_{p}}=\Phi_{a}^{v_{p}}P(\tau)^{v_{p}})$\footnote{The reduction modulo $v_{p}$ of the coefficients is actually an algebra homomorphism:\[(\cdot)^{v_{p}}:End_{\mathcal{F}}(\Phi)\to End_{\mathbb{F}_{q^{s}}}(\Phi^{v_{p}})\]not necessarily surjective, according with the fact that, differently than $End_{\mathcal{F}}(\Phi)$, $End_{\mathbb{F}_{q^{s}}}(\Phi^{v_{p}})$ is not necessarily abelian.}, and it is injective (if $P(\tau)^{v_{p}}=0$, $P(\tau)\in End_{\mathcal{F}}(\Phi)\setminus\{0\}$ is entire over $A$, satisfying a polynomial $f(X)\in A[X]$ having constant coefficient $\alpha$ necessarily different from $0$. As $(\cdot)^{v_{p}}$ is an algebra homomorphism, $f(P(\tau))=0$ implies that $f(0)=f(P(\tau)^{v_{p}})=f(P(\tau))^{v_{p}}=0$, therefore $\Phi^{v_{p}}(\alpha)=0$ and, by the fact that $\Phi^{v_{p}}$ is injective, one finds that $\alpha=0$). We thus conclude that the $A-$rank of $End_{\mathbb{F}_{q^{s}}}(\Phi^{v_{p}})$ is at least the same of $End_{\mathcal{F}}(\Phi)$.\\\\
We know (see \cite{Goss}, Proposition 4.7.13) that any isogeny between two Drinfeld modules divides an element of $A\setminus\{0\}$. We thus tensorize over $A$ with $k$ the category of Drinfeld modules and isogenies. One therefore extends in a natural fashion the algebra homomorphism $\Phi^{v_{p}}$ to $k$. This provides a field embedding $End_{k}(\Phi)\otimes_{A}k\hookrightarrow End_{\mathbb{F}_{q^{s}}}(\Phi^{v_{p}})\otimes_{A}k$. %If $w$ is a place over $k$ one calls:\[V_{w}(\Phi^{v_{p}}):=T_{w}(\Phi^{v_{p}})\otimes_{A}k\]where $T_{w}(\Phi^{v_{p}})$ is the Tate $A_{w}-$module (see \cite{Goss}, paragraphe 4.10). We remind that if $w\neq p(T)$, then:\[T_{w}(\Phi^{v_{p}})\simeq A_{w}^{d}.\]We know that $F$ is an $A-$integer belonging to $End_{\mathbb{F}_{q^{s}}}(\Phi^{v_{p}})$, over $A$. We call $n$ its $A-$degree. We also 
We now call:\[D_{p}:=End_{\mathbb{F}_{q^{s}}}(\Phi^{v_{p}})\otimes_{A}k\]and:\[E_{p}:=k(\tau^{s})\subset D_{p}.\]%We call:\[n:=[E:k]\]\[t:=[K:E]\]where $K$ is a maximal field in $D$ containing $E$. Note that the degree $[K:E]$ does not depend on the choice of $K$ (see \cite{Goss}, Corollary 4.11.15) and that $K$ coincides with its own centralizer in $D$. Indeed, by \cite{Goss} Lemma 4.12.7 the division algebra $D$ is central over $E$. We thus have the following theorem:
%\begin{thm}
%\begin{enumerate}
%	\item There is only one place $P_{E}$ over $E$ which is a zero of $F$, and only one place $\infty_{E}$ over $E$ dividing $\infty$.
%	\item $V_{w}(\widetilde{\Phi^{v_{p}}})$ is a $E_{w}-$vector space with dimension $t$ for each place $w\neq P_{E},\infty_{E}$ in $E$, while $V_{P_{E}}(\widetilde{\Phi^{v_{p}}})=0$.
%	\item $d=tn$.
%	\item $w_{\infty_{E}}(F)=-n$.
%\end{enumerate}
%\end{thm}
%\begin{proof}
%See \cite{Goss}, Theorem 4.12.8, points 1, 3, 4 and 5.
%\end{proof}
\subsection{Counting supersingular primes}
We state now a Theorem which provides a criterion to describe the supersingular primes of $\Phi$. See \cite{Goss}, Proposition 4.12.17 for the complete statement.
\begin{thm}
Let $\mathbb{D}_{p}=(\mathbb{G}_{a},\Phi^{v_{p}})$ be the rank $d$ Drinfeld module obtained by reducing modulo $p(T)$ the characteristic 0 one $\mathbb{D}=(\mathbb{G}_{a},\Phi)$ defined over $k$. We then have the following equivalences:
\begin{enumerate}
%	\item There exists a finite extension $\mathbb{F}_{q^{\alpha s}}$ of $\mathbb{F}_{q^{s}}$ such that:\[\dim_{k}(End_{\mathbb{F}_{q^{\alpha s}}}(\Phi^{v_{p}})\otimes_{A}k)=d^{2}.\]
%	\item There exists a power $F^{\alpha}$ of $F$ such that $F^{\alpha} \in A$. 
	\item $p(T)$ is a supersingular reduction prime of $\mathbb{D}$.
	\item %$P_{E}$ is the 
	There is only one place in $E_{p}$ dividing $p(T)$.
\end{enumerate}
\end{thm}
\begin{proof}
See \cite{Goss}, Proposition 4.12.17.
\end{proof}
We now apply to this description the \textbf{Chebotarev Effective Density Theorem} for function fields (see \cite{Fried-Jarden}, Proposition 6.4.8). We send the reader to such a reference for all details.\\\\
Given $L$ a finite and Galois extension of $k$ we call $G(L/k)$ its Galois group, and for every $p(T)\in S(A)$ which is unramified in $L$ we call $\left(\frac{L/k}{p}\right)$ the corresponding \textbf{Artin symbol}. We recall (see the Introduction) that $P_{N}(A)$ is the set of all monic and irreducible $p(T)\in A$ such that $\deg_{T}(p(T))=N$, for any given $N\in \mathbb{N}\setminus\{0\}$. For any such an $N$, given a conjugacy class $\mathscr{C}$ in $G(L/k)$ we define:\[C_{N}(L/k, \mathscr{C}):=\{p(T)\in P_{N}(A),\textsl{ }p(T)\textsl{ unramified in }L/k,\textsl{ }\left(\frac{L/k}{p}\right)=\mathscr{C}\}.\]
\begin{thm}
Let $L$ be a finite Galois extension of $k$. Let $\mathbb{F}_{q^{\eta}}$ be the algebraic closure of $\mathbb{F}_{q}$ in $L$, and let $\mu:=[L:k\mathbb{F}_{q^{\eta}}]$. Let $\mathscr{C}$ be a conjugacy class in the Galois group $G(L/k)$. Let $a\in \mathbb{N}$ %, where $\eta=[K\mathbb{F}_{q^{\eta}}:K]$. Let $P_{N}(\mathcal{O}_{K}):=\{p(T)\in Spec$  $\mathcal{O}_{K}, \deg_{T}(p(T))=N\}$, where $\mathcal{O}_{K}$ is the ring of $k-$integers of $K$. We define similarly $\mathcal{O}_{L}$ in $L$. Thus let $P\in$ Spec $\mathcal{O}_{L}$ be such that $P|p(T)$. We assume that every $p(T)\in P_{N}(\mathcal{O}_{K})$ except possibly finitely many is unramified in $\mathcal{O}_{L}$. Thus reducing modulo $p(T)$ induces the isomorphism:\[D(P|p(T))\simeq G(L_{P}/K_{p})\simeq \mathbb{Z}/f_{p}\mathbb{Z}\simeq <\sigma_{p}>\]where $f_{p}$ is the inertia degree of $p(T)$ in $\mathcal{O}_{L}$. We call $\left(\frac{L/K}{p}\right)$ the conjugacy class of the generator $\sigma_{p}$ of $D(P|p(T))$ in $G(L/K)$. Let $\mathscr{C}$ be a conjugacy class in $G(L/K)$. We thus define:\[C_{N}(L/K, \mathscr{C}):=\{p(T)\in P_{N}(\mathcal{O}_{K}),\left(\frac{L/K}{p}\right)=\mathscr{C}\}.\]So, let $a\in \mathbb{N}$ 
be such that:\[\sigma|_{\mathbb{F}_{q^{\eta}}}=\tau^{a}|_{\mathbb{F}_{q^{\eta}}}\]for each $\sigma\in \mathscr{C}$. 
\begin{enumerate}
	\item If $N\not\equiv a\texttt{  } (\eta)$, then $C_{N}(L/k, \mathscr{C})=\emptyset$.
	\item If $N\equiv a\texttt{  } (\eta)$, then $|C_{N}(L/k, \mathscr{C})|\thicksim_{N\to +\infty} \frac{|\mathscr{C}|q^{N}}{N\mu}$.
\end{enumerate}
\end{thm}
\begin{thm}
If a Drinfeld module $\mathbb{D}=(\mathbb{G}_{a},\Phi$) with characteristic $0$ and coefficients in $k$ has rank $d=1$ or a prime number different from the field characteristic, and if it is CM, then it is either %isogenous to a 
RV($1,1/2d$)$^{*}$ or RV$_{d}$($1,1/2$)$^{*}$. %, where $\eta\in \mathbb{N}\setminus\{0\}$ is such that $\mathbb{F}_{q^{\eta}}$ is the algebraic closure of $\mathbb{F}_{q}$ in $End_{k}(\Phi)\otimes_{A}k$.
\end{thm}
\begin{proof}
Let us call $L:=End_{k}(\Phi)\otimes_{A}k$. It is not hard to prove that $L$ is actually a field (see for example \cite{D}, Proposition 1.4.14). Moreover, $L/k$ is normal: given $\sigma\in Aut(\ov{k}/k)$ and $P(\tau)\in End_{k}(\Phi)$, for each $a\in A$ we have $\sigma(\Phi_{a}P(\tau))=\sigma(P(\tau)\Phi_{a})=\sigma(P(\tau))\Phi_{a}=\Phi_{a}\sigma(P(\tau))$. Lastly, as $[L:k]=d$ is a prime number different from the field characteristic, it follows immediately that the field extension $L/k$ has to be separable. Hence $L/k$ is a Galois field extension and Theorem 5 applies to it.\\
We know that every maximal field in $D_{p}$, for each $p(T)\in$ $S(A)$, always contains $E_{p}$ (by \cite{Goss}, Theorem 4.12.7, $D_{p}$ is central over $E_{p}$), and that it has degree at most $d$ over $k$. As $\mathbb{D}$ is CM, the field extension $L/k$ has degree $d$ and coincides, embedded in $D_{p}$ via $\widetilde{\Phi^{v_{p}}}$, with a maximal field of $D_{p}$. %(in case not, it would be strictly contained in some maximal field, whose degree over $k$ would be greater than $d$, which contradicts Theorem 4, point 3). 
For each $p(T)\in$ $S(A)$, $L$ will thus always contain $E_{p}$. %This means that up to an $A-$action $F_{p}\in End_{k}(\Phi)$ for each $p(T)\in$ $S(A)$ supersingular. Indeed, $End_{k}(\Phi)$ is an order in $End_{k}(\Phi)\otimes_{A}k$ and it is contained in the maximal order of the same field, which is the ring of integers $\widetilde{O}$. Therefore, $End_{k}(\Phi)$ is an $A-$sublattice of $\widetilde{O}$. Now we recall that by Theorem 5 $F_{p}\in \widetilde{O}$ and this implies that there exists $a\in A\setminus\{0\}$ such that $\Phi(a)\circ F_{p}\in End_{k}(\Phi)$. %by \cite{Goss}, Proposition 4.7.19, $\mathbb{D}$ is isogenous to a Drinfeld module $\mathbb{D}'=(\mathbb{G}_{a},\Psi)$ CM with respect to the ring of integers $\widetilde{O}$ in $End_{k}(\Phi)\otimes_{A}k$ (in other words, having endomorphism ring isomorphic to $\widetilde{O}$). As  by Theorem 5 $F_{p}\in \widetilde{O}$, one can therefore identify $F_{p}$ with an element of $End_{k}(\Psi)$ and, by Remark 5, with an element of $End_{k}(\Phi)$. 
%The division algebra $A\{F_{p}\}\otimes_{A}k$ obtained by the embedding of $F_{p}$ in $k\{\tau\}$ via $\widetilde{\Phi}$, is therefore a field $E_{p}=k(F_{p})$ contained in $End_{k}(\Phi)\otimes_{A}k$, which may be identified via the isomorphism induced by $\widetilde{\Phi^{v_{p}}}$ to $E_{p}$, which is contained in $End_{\mathbb{F}_{q^{s}}}(\Phi^{v_{p}})\otimes_{A}k$ as previously remarked.\\\\
This implies that the primes of $A$ which remain inert in $L$ also remain inert in $E_{p}$. %via the restriction-to-$E_{p}-$map: $P\mapsto P\cap E_{p}$ for each prime $P$ dividing $p(T)$ in $End_{k}(\Phi)\otimes_{A}k$. 
In order to obtain a lower bound of the supersingular primes of $\Phi$ it is therefore sufficient, by Theorem 4, to focus only on the primes of $A$ which remain inert in $L$. %By Theorem 5 we immediately obtain a lower bound estimate of the number of the supersingular reduction primes of $\Phi$ by providing an estimate of the primes of $A$ which remain inert in $End_{k}(\Phi)\otimes_{A}k$ (excluding the finitely many primes which ramify in $E_{p}$). 
%We now show that the field extension $L/k$ satisfies the hypotheses of Theorem 6. %In general, however, it is not necessarily separable. It decomposes in this way: $L/k'/k$, where $L/k'$ is separable and $k'/k$ is purely inseparable. The extension $k'/k$ is normal, with $k'=$Frac($A'$), where $A'=\mathbb{F}_{q}[T^{1/e}]$ and $e$ is a power of the characteristic $p$. This yields a bijection:\[S(A)\longleftrightarrow S(A')\]\[p(T)\longleftrightarrow p(T)^{1/e}.\]%Excluding the finitely many primes of $A'$ which ramify in $L$, the criterion provided by Theorem 5 allows to identify the supersingular primes of $\mathbb{D}$ with the inert ones in the extension $L/k'$. 
%Hence, counting the supersingular primes of $\mathbb{D}$ means precisely counting the primes of $A'$ which remain inert in $L$. All we have to do is therefore apply Theorem 6 to the Galois extension $L/k'$. Note that $k'=\mathbb{F}_{q}(T^{1/e})$ is clearly a field isomorphic to $k$. 
%We also remark that if the field extension $L/k$ is regular, so that $\eta=1$, we have that $\mathbb{D}$ is RV$_{1}$($r,c_{1}$)$^{*}$. %We thus apply Theorem 6 to $L=End_{k}(\Phi)\otimes_{A}k$ and $K=k$, knowing that the supersingular primes of $\Phi$ are, except the finitely many ramified ones, 
Now, all the primes of $A$ which are inert in $L$ are exactly those whose decomposition group (cyclic because they do not ramify in $L$) is $G(L/k)=\langle \sigma\rangle$, by calling $\sigma$ the generator of such a group. Note that our hypothesis that $d$ is a prime number is crucial. In case the cardinality $d$ of $G(L/k)$ were not a prime number, this group might not be cyclic, which would make empty the set of primes of $A$ which remain inert in $L$. Hence, the primes we are looking for are precisely all $p(T)\in$ $S(A)$ such that $\left(\frac{L/k}{p}\right)=\{\sigma\}$. Following the same notations as in Theorem 5 we have:\[\xymatrix{k=\mathbb{F}_{q}(T)\ar@{-}[r]^{\eta}& \mathbb{F}_{q^{\eta}}(T)\ar@{-}[r]^{\mu}&L\simeq \mathbb{F}_{q^{\eta}}(T^{1/\mu})}\]with $L/k$ finite and Galois cyclic extension of degree $d=\eta\mu$, which implies that $G(L/\mathbb{F}_{q^{\eta}}(T))\simeq \mathbb{Z}/\mu\mathbb{Z}$. Clearly, as we are assuming that $d$ is a prime number, it follows that either $\eta=1$ either $\eta=d$. As such extensions are cyclic, the restriction of $\sigma$ to $\mathbb{F}_{q^{\eta}}$ is the generator of the cyclic subgroup $G(L/\mathbb{F}_{q^{\eta}}(T))\simeq \mathbb{Z}/\mu \mathbb{Z}$. The integer $a$ such that res$_{\mathbb{F}_{q^{\eta}}}\tau^{a}=$res$_{\mathbb{F}_{q^{\eta}}}\sigma$ is therefore always $1$. %(in case of $\eta=1$ that would be more natural to say that $a=0$, but as a natural number is at the same time congruent both to $0$ and to $1$ modulo $1$, there is no difference in always choosing $a=1$). 
If $\eta=1$:\[L\simeq \mathbb{F}_{q}(T^{1/d})\Longrightarrow\texttt{   }|C_{N}(L/k, \{\sigma\})|\thicksim_{N\to +\infty} \frac{q^{N}}{dN}.\]Hence $\mathbb{D}$ is RV($1,1/2d$)$^{*}$ in this case. If on the other hand $\eta=d$ (which means that $\mu=1$ and $End_{k}(\Phi)=\mathbb{F}_{q^{d}}[T]$), we will have that:\[N\not\equiv 1\texttt{   }(d)\Longrightarrow C_{N}(L/k, \{\sigma\})=\emptyset\]\[N\equiv 1\texttt{   }(d)\Longrightarrow |C_{N}(L/k, \{\sigma\})|\thicksim_{N\to +\infty} \frac{q^{N}}{N}.\]This shows that $\mathbb{D}$ is RV$_{d}$($1,1/2$)$^{*}$ in this case. %M\^{e}me si la condition RV($1,1/(2q^{\eta}d)$)$^{*}$ n'est pas réellement satisfaite comme $C_{N}(L/k,\{\sigma\})=\emptyset$ pour tout $N$ pas congru à $1$ modulo $\eta$, la simple inégalité $N-\eta< N$ permet dans les faits d'attribuer à $\mathbb{D}$ toutes les propriétés qui sont conséquence d'une telle condition, notamment en ce qui concerne l'application des Théorèmes précédents concernants la minoration de la hauteur canonique des points pas de torsion liée à $\mathbb{D}$.%Le Théorème de Chebotarev général pour les corps de fonctions (voir \cite{Fried-Jarden}, Theorem 6.3.1) nous dit d'ailleurs que, en appelant:\[\mathcal{A}_{N}(\{\sigma\}):=\{p\in \mathcal{P}_{N}(k), \left(\frac{L/k}{p}\right)=\{\sigma\}\}\]et:\[\mathcal{A}(\{\sigma\}):=\{p\in S(A), \left(\frac{L/k}{p}\right)=\{\sigma\}\}\]où $\delta(\mathcal{A}(\{\sigma\}))$ est la densité de Dirichlet de cet ensemble, celle-ci existe et sa valeur est:\[\delta(\mathcal{A}(\{\sigma\}))=\lim_{N\to +\infty}\delta(\mathcal{A}_{N}(\{\sigma\}))=\frac{1}{d}.\]De plus:

%En posant donc $\deg_{T}(p)=\deg_{T}(p)=N$ et, suivant la m\^{e}me notation du Théorème 34 avec $\mathcal{C}=\{\sigma\}$ la classe de conjugaison dans $G(L/k)$ correspondante au générateur $\sigma$ de ce groupe cyclique:\[\mathcal{A}_{N}(\mathcal{C}):=\{p\in P(k), \deg_{T}(p)=N,\left(\frac{L/k}{p}\right)=\mathcal{C}\}\]\[P_{N}(k):=\{p\in P(k),\deg_{T}(p)=N\}\]on obtient:
%\[\delta(\mathcal{A}_{N}(\{\sigma\}))=\frac{q^{-N}|\mathcal{A}_{N}(\{\sigma\})|}{q^{-N}|\mathcal{P}_{N}(k)|}=\]\[=\frac{|\mathcal{A}_{N}(\{\sigma\})|}{|\mathcal{P}_{N}(k)|}.\]Soit:\[\alpha_{N}:=|\mathcal{A}_{N}(\{\sigma\})|\]la cardinalité de l'ensemble des premiers de $A$ de degré $N$ totalement inertes dans $L$, on a que:\[\xymatrix{\frac{\alpha_{N}}{|\mathcal{P}_{N}(k)|}\ar[r]_{N\to +\infty}&1/d}.\]Par la Proposition 4 on a alors que:\[\alpha_{N}\geq \frac{q^{N}}{2dN}.\]On souligne qu'une telle minoration sera valable à partir d'une valeur de $N$ assez grande. En définitif, les estimations données par le Théorème de Chebotarev effectif (pour $N>>0$) tendent pour $N\to +\infty$ vers celle correspondant à la situation où $\mu=d$ (où $\mathbb{F}_{q}$ est algébriquement clos dans $L$). On a par conséquent la condition RV($1,1/2d$)$^{*}$. 
\end{proof}
\textsl{Remark}: Let $\mathbb{D}$ be a Drinfeld module satisfying the same hypotheses as in Theorem 6. We have seen in the proof that for $N$ sufficiently large with $N\equiv 1$ mod $(\eta)$, the number of supersingular primes $p(T)\in S(A)$ of degree $N$ is at least $\frac{q^{N}}{2N\mu}$. It easily follows from this that, for $N$ sufficiently large (without any congruence condition), the number of supersingular primes of degree $\leq N$ is at least $c_{\mathbb{D}}\frac{q^{N}}{N}$, for a certain real number $c_{\mathbb{D}}>0$ depending on $\mathbb{D}$. This gives an estimate in the same spirit as in C. David's work \cite{C. David}.

\[\]\[\]\[\]\[\]2010 Mathematics Subject Classification Codes: 11 G 09 and 11 G 50
\end{document}